\title[ ]{Upper  bounds  on the spectral gaps of quasi-periodic Schr\"odinger  operators  with Liouville frequencies}
\author{Wencai Liu}
\address[Wencai Liu]{Department of Mathematics, University of California, Irvine, California 92697-3875, USA} \email{liuwencai1226@gmail.com}
\author{Yunfeng Shi}
\address[Yunfeng Shi]{School of Mathematical Sciences,
Fudan University,
Shanghai 200433,
P. R. China}
\address[]{School of Mathematical Sciences,
Peking University,
Beijing 100871,
P. R. China}
 \email{yunfengshi13@fudan.edu.cn}
\theoremstyle{plain}
\newtheorem{thm}{Theorem}[section]
 \newtheorem{lem}[thm]{Lemma}
 \theoremstyle{definition}
 \theoremstyle{remark}
 \newtheorem{rem}[thm]{Remark}
 \numberwithin{equation}{section}
\begin{document}


\begin{abstract}
 We prove that the size of  the spectral  gaps of  weakly coupled quasi-periodic Schr\"odinger  operators with Liouville frequencies decays exponentially. As an application, we   obtain the    homogeneity of the spectrum.
\end{abstract}

\maketitle
\section{Introduction and main results}
Let us consider a  quasi-periodic Schr\"odinger  operator given   by
\begin{equation}\label{qpso}
(H_{\lambda f,\alpha,\theta}x)_n=x_{n+1}+x_{n-1}+\lambda f(\theta+n\alpha)x_n,
\end{equation}
 where ${x=\{x_n\}_{n\in\mathbb{Z}}}\in l^2(\mathbb{Z})$  and  $f$ is a real analytic function on $\mathbb{R}/\mathbb{Z}$. We will refer to such operators as QPS. The QPS depend on three parameters $(\alpha,
\lambda,\theta)\in \mathbb{R}^3$. Usually,  we call
  $\alpha $  the  frequency, $\theta $  the  phase and $\lambda$   the coupling.  In particular, if $f(x)=2\cos{(2\pi x)}$,  we call (\ref{qpso}) an almost Mathieu operator (AMO). We will
   denote  AMO by  $H_{\lambda,\alpha,\theta}$ .

 For rational frequency $\alpha$, the spectrum consists  of finite number of intervals by Floquet theory.
For irrational frequency $\alpha$ and nonzero coupling constant,
it is well-known that the spectrum does not depend on $\theta$  and we denote it by $ \Sigma_{\lambda f,\alpha}$ ( $ \Sigma_{\lambda,\alpha}$ for AMO).
From now on, we always assume $\alpha$ is irrational.
Each    connected component of  $[E_{\text{min}},E_{\text{max}}]\backslash \Sigma_{\lambda f,\alpha}$ is called    a   spectral  gap, where $ E_{\text{min}} =\min\{E: E\in \Sigma_{\lambda f,\alpha}\} $ and $ E_{\text{max}} =\max\{E: E\in \Sigma_{\lambda f,\alpha}\}$.
If $\lambda=0$,  the spectrum $\Sigma_{\lambda f,\alpha}=[-2,2]$  so that  there is no spectral gap.
  Then it is interesting to  study  upper bounds of the size of the    spectral gaps under small perturbation (i.e. $\lambda$ is small).
In order to state the results,
we introduce the   fibered rotation number  $\rho_{\lambda f,\alpha}(E)$ (see section 2.3) of  QPS, which has the following properties \cite{JM1982}:
\begin{description}
  \item [(i)] $\rho_{\lambda f,\alpha}(\cdot)$ is a continuous non-increasing surjective function with
$\rho_{\lambda f,\alpha}\text{:\;}\mathbb{R} \rightarrow[0,\frac{1}{2}]$, $\rho_{\lambda f,\alpha}(E)=\frac{1}{2}$ for $E\leq E_{\text{min}}$ and $\rho_{\lambda f,\alpha}(E)=0$ for $E\geq E_{\text{max}}$.
  \item [(ii)] For each spectral gap $G$, there exists a unique integer   $m\in \mathbb{Z}\backslash \{0\}$, such that the fibered rotation
number restricted to the spectral gap satisfies $ 2\rho_{\lambda f,\alpha}|_G\equiv m\alpha\text{ mod } \mathbb{Z}$.
\end{description}

For any $m\in \mathbb{Z}\backslash \{0\}$, let us define
\begin{equation}\label{liugap}
    [E_m^-,E_m^+]=\{E\in \mathbb{R}:2\rho_{\lambda f,\alpha}(E)=  m\alpha\text{ mod } \mathbb{Z} \}.
\end{equation}
Now we distinguish  two cases.
\begin{itemize}
 \item[(1)] $E_m^-<E_m^+$. In this case, letting $G_m=(E_{m}^-,E_{m}^+)$, then $G_m$ is a   spectral gap.
  \item[(2)]  $E_m^-=E_m^+$. In this case,  let $G_m=\{E_{m}^-\}$ and we call  $G_m=\{E_{m}^-\}$   a collapsed spectral gap.
\end{itemize}
Thus  for any $m\in \mathbb{Z}\backslash \{0\}$, there exists a corresponding (possibly collapsed) spectral gap  $G_m$.

We give some history of the results on  the lower bounds of the spectral gaps first, which originate  from the  study of  the dry  Ten Martini Problem.
The spectrum $\Sigma_{\lambda,\alpha}$ of AMO has been conjectured to be a
Cantor set (dubbed  the   Ten Martini Problem), which was finally proved by Avila-Jitomirskaya  \cite{Ten}, after an a.e. result by Puig \cite{Puig}.
The dry Ten Martini Problem asserts   that AMO   has no collapsed spectral gap for all $\lambda\neq 0$ and $\alpha\in \mathbb{R}\backslash \mathbb{Q}$, which
is stronger than the Ten Martini Problem by   properties of  the fibered  rotation number $\rho_{\lambda,\alpha}$.
In \cite{CEY},  Choi-Elliott-Yui showed that AMO has no collapsed spectral gap by setting up the lower bounds of the spectral gaps if  $\alpha$ is Liouville and $\lambda$ satisfies some assumption.
Here,
 $\alpha$ is Liouville means  $\beta(\alpha)>0$, where
 \begin{equation}\label{sde}
\beta(\alpha)=\limsup_{k\rightarrow \infty}\frac{-\ln{||k\alpha||_{\mathbb{R}/\mathbb{Z}}}}{|k|},
\end{equation}
and $||x||_{\mathbb{R}/\mathbb{Z}}=\min\limits_{k\in\mathbb{Z}}|x-k|$.
To  the contrary, if $\beta(\alpha)=0$,
 we say $\alpha$ is weak Diophantine  \footnote{We say $\alpha$ is   Diophantine if there exists $\kappa,\tau>0$ such that $||k\alpha||_{\mathbb{R}/\mathbb{Z}}\geq \frac{\tau}{|k|^{\kappa}} $ for any $k\neq0$.}.

 Later, Puig proved the dry Ten Martini Problem for a set of $(\alpha,\lambda)$ of  positive Lebesgue measure \cite{Puig,Puig06}.
 Puig approached it by conjugating the Schr\"odinger cocycle $S_{\lambda f,E}$ to a parabolic matrix $\left[
                                                                                          \begin{array}{cc}
                                                                                            \pm 1 & \mu \\
                                                                                            0 & \pm 1 \\
                                                                                          \end{array}
                                                                                        \right]
 $ and perturbing  $S_{\lambda f,E}$ to   $S_{\lambda f,E + \varepsilon}$, where $E=E_m^-$ or $E=E_m^+$.
 This idea is  significantly developed    by Avila and Jitomirskaya  \cite{AJa1,AJCMP,AA2008}, in which  they were able to deal with all Diophantine $\alpha$ and $\lambda\neq \pm 1$. Avila-Jitomirskaya's reducibility result  also holds for  general analytical potentials  with small coupling constant.
 For the quantitative lower bounds of the spectral gaps,  see \cite{Leguil1,krasovsky2016central} and the references therein.

 Now, let us move to the upper bounds on   spectral gaps.
 Moser and P\"oschel have shown that for a small analytic potential
and a Diophantine vector of frequencies,    the spectral gap with  some certain label $k$  decays exponentially.
 For  the  continuous quasi-periodic operators, the breakthrough is from  Damanik and Goldstein where they
  obtained very precise exponential decay in terms of the smallness
of  potentials, and   the decay rate is  bounded  by the size of the
analytic strip \cite{dam}.
As an application, the homogeneity of the spectrum can also be obtained \cite{dam1}.
 For the discrete case,
Leguil-You-Zhao-Zhou   \cite{Leguil1} showed that the rotation number $\rho_{\lambda f,\alpha}(E+\varepsilon)$  of $S_{\lambda f,E + \varepsilon}$ will change under large perturbation $\varepsilon$ based on the reducibility result of   \cite{AJa1}, where $E=E_m^-$ or $E=E_m^+$. This leads to an upper bound of the spectral gap.
We will say more after the statements of our main results.
  Before that,
  Amor \cite{Amor} got an upper bound on the spectral gap  for  small coupling constant and  Diophantine frequency by KAM theory stemming  from \cite{eli}.  
Finally, we mention that
 there are some direct results about the   homogeneity of the spectrum, see \cite{gold2015,gold2016,fill}.

 However, all of  the results for  general analytic potentials on the  upper bounds of   spectral gaps   are focused  on  (weak) Diophantine frequencies.
 Recently, there has been a significant interest in extending various Diophantine results to the case of Liouville frequencies, as phase transitions in the behaviors of various objects happen in this regime \cite{ayzduke,ruij,avila2016second,jl18,jl18r,Ji}. The contribution of the present paper is to investigate the upper bounds on gaps for the  Liouville frequencies.
\begin{thm}\label{main}
Let  $H_{\lambda f,\alpha,\theta}$ be given by \eqref{qpso} and   $E_m^-, E_m^+$  be given by \eqref{liugap}.  Suppose  $\alpha\in \mathbb{R}\setminus \mathbb{Q}$ satisfies $0\leq \beta(\alpha)<\infty$.
Then there exists an absolute constant  $C>0 $  such that if $f$ is analytic on the strip $\{x\in\mathbb{C}/\mathbb{Z}:|\Im{x}|<h\}$ and $\beta(\alpha)\leq \frac{h}{C^2}$,  then there exist  $\lambda_0=\lambda_0(f,h,\beta(\alpha))>0$ and  $m_{\star}= m_{\star}(\lambda,f,h,\alpha)>0$ such that for any $|\lambda|\leq \lambda_0$, the following estimate holds
\begin{equation*}\label{epd}
E_m^+-E_m^-\leq e^{-\frac{h}{C}|m|}
\end{equation*}
  for $|m|\geq m_{\star}$. In the  particular case of AMO, $\lambda_0=e^{-C^2h}$.
\end{thm}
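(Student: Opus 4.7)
The plan is to follow the reducibility--plus--rotation-number-perturbation strategy of Leguil--You--Zhao--Zhou \cite{Leguil1}, adapted to the Liouville regime $\beta(\alpha)>0$. The hypothesis $\beta(\alpha)\le h/C^2$ is precisely what allows the resonance-driven growth of the conjugacies to be absorbed by the target decay rate $e^{-h|m|/C}$.

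Step 1 (Quantitative almost reducibility at a gap edge). Fix $m\in\Z\setminus\{0\}$ with $|m|\ge m_\star$ and set $E=E_m^-$. By property~(ii) we have $2\rho_{\lambda f,\alpha}(E_m^-)=m\alpha\bmod\Z$, so the relevant resonance sits at wave number $m$. I would invoke a Liouville-compatible almost reducibility theorem---in the style of Hou--You and Avila's Almost Reducibility Theorem, combined with the structural observation of Puig / Avila--Jitomirskaya that the Schr\"odinger cocycle at a gap edge is asymptotically parabolic---to produce an analytic conjugacy $B_m\colon\T\to\mathrm{SL}(2,\R)$ on a narrower strip of width $h'=h/C_1$ such that
\begin{equation*}
B_m(\theta+\alpha)^{-1}\,S_{\lambda f,E_m^-}(\theta)\,B_m(\theta)=P_-+R_m(\theta),\qquad P_-=\pm\begin{pmatrix}1 & c_-\\ 0 & 1\end{pmatrix},
\end{equation*}
with $R_m$ of sufficiently small analytic norm and, crucially, the quantitative bound $\|B_m\|_{h'}\le e^{C_2|m|\beta(\alpha)}$. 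Producing this quantitative conjugacy is the main obstacle of the whole proof: the smallness of $\lambda$ (encoded in $\lambda_0=\lambda_0(f,h,\beta(\alpha))$) is the input for an iterative KAM scheme which must track the single large resonance of order $|m|$ without creating spurious ones. For AMO one may instead go through Aubry duality and the Avila--Jitomirskaya analysis of the dual cocycle, which yields the explicit $\lambda_0=e^{-C^2h}$.

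Step 2 (Perturbation in $E$). From $S_{\lambda f,E+\varepsilon}=S_{\lambda f,E}+\varepsilon\begin{pmatrix}1&0\\0&0\end{pmatrix}$, conjugation by $B_m$ turns the cocycle at $E_m^-+\varepsilon$ into
\begin{equation*}
P_-+R_m(\theta)+\varepsilon\,B_m(\theta+\alpha)^{-1}\begin{pmatrix}1&0\\0&0\end{pmatrix}B_m(\theta).
\end{equation*}
For $\varepsilon$ below a threshold $\varepsilon_0$ controlled by $\|B_m\|_{h'}^{-2}$ times an exponentially small factor coming from the $|m|$-th harmonic governing the rotation-number drift off the parabolic locus, the perturbed cocycle remains uniformly hyperbolic with the \emph{same} rotation number $m\alpha/2\bmod\Z$, so $E_m^-+\varepsilon$ stays inside the gap. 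Once $\varepsilon$ exceeds $\varepsilon_0$, the standard parabolic perturbation argument (as in \cite{Leguil1,Puig,Puig06}) forces the rotation number to move off $m\alpha/2\bmod\Z$, which can only happen if $E_m^-+\varepsilon\ge E_m^+$. Hence $E_m^+-E_m^-\le\varepsilon_0$.

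Step 3 (Combining and absorbing constants). Substituting the bound $\|B_m\|_{h'}^2\le e^{2C_2|m|\beta(\alpha)}$ into $\varepsilon_0\lesssim\|B_m\|_{h'}^{-2}e^{-h'|m|}$ yields
\begin{equation*}
E_m^+-E_m^-\le e^{2C_2|m|\beta(\alpha)-h'|m|+O(1)}.
\end{equation*}
Choosing $C$ large enough that $h'-2C_2\beta(\alpha)\ge h/C$---exactly the content of $\beta(\alpha)\le h/C^2$---and taking $|m|\ge m_\star$ large enough to absorb the $O(1)$ additive term yields the desired $E_m^+-E_m^-\le e^{-h|m|/C}$; the symmetric argument at $E_m^+$ gives the same estimate from the other side. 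As indicated, the central difficulty is Step~1: establishing the precise quantitative almost reducibility in the Liouville regime with explicit control on the conjugacy norm, which is what makes the Liouville condition $\beta(\alpha)\le h/C^2$ both natural and necessary in this approach.
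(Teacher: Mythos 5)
Your high-level strategy matches the paper's: obtain a quantitative almost reducibility to a parabolic matrix at the gap edge, perturb the energy, show the fibered rotation number shifts, and deduce a gap-size bound by balancing the $e^{-h|m|}$-type decay against the $e^{C\beta(\alpha)|m|}$-type growth of the conjugacy norm. However, there are two substantive gaps, and one inaccuracy about which route the paper actually takes.

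First, the paper does \emph{not} split into a general Liouville-ARC argument for arbitrary analytic $f$ and an Aubry-duality argument for AMO. It uses Aubry duality throughout: the conjugacy is built directly from a Bloch wave $\widehat{\mathcal{U}}(x)=e^{i\pi\widetilde n x}\mathcal{U}(x)$ coming from an almost-localized dual eigenvector (Lemmas~\ref{Blol1}, \ref{Blol2}), and this is what produces both the parabolic normal form and the crucial norm bound $\|R\|_{20\beta(\alpha)}\le C_\star e^{C\beta(\alpha)n}$. You flag this as ``the main obstacle'' and then assume it; but in the Liouville regime the KAM/ARC schemes you cite do not by themselves deliver a conjugacy whose norm depends only on $\beta(\alpha)|m|$ (rather than $h|m|$ or worse), and that specific dependence is what makes the absorption in Step~3 possible. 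Without specifying that the conjugacy is built from the localized dual solution, the claimed bound $\|B_m\|_{h'}\le e^{C_2|m|\beta(\alpha)}$ is unsupported.

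Second, you attribute the exponentially small threshold factor $e^{-h'|m|}$ to ``the $|m|$-th harmonic governing the rotation-number drift,'' but this is not where it comes from. The decisive quantity is the off-diagonal entry $\mu_m$ of the parabolic normal form, and the paper bounds it by iterating the conjugated identity $R^{-1}(x+l\alpha)A_l^E(x)R(x)=[\pm1,\;l\mu_m;\;0,\;\pm1]$ up to length $l\sim e^{\eta n}$ and invoking the subexponential growth $\sup_{k\le e^{\eta n}}\|A_k^E\|\le C_\star e^{C\beta(\alpha)n}$ (Lemma~\ref{te}). This yields $|\mu_m|\le C_\star e^{-\eta n/2}$ with $\eta\sim h$. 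Related to this, your Step~2 is too qualitative: in the Liouville case one cannot simply invoke the ``standard parabolic perturbation argument''; the paper needs two rounds of averaging (Theorem~\ref{p3}, shrinking the strip from $10\beta$ to $5\beta$ to $0$ against the small-divisor bound $\|k\alpha\|\ge C(\alpha)e^{-2\beta|k|}$) to produce the refined form $e^{\mathfrak{P}+\epsilon\mathfrak{P}_1+\epsilon^2\mathfrak{P}_2+\epsilon^3\mathfrak{R}_\epsilon}$, from which a \emph{specific} $\epsilon_m$ (of order $\mu_m\|R\|^2$) is chosen so that the resulting constant part is elliptic and the rotation number provably moves. This precise choice of $\epsilon_m$ and the verification of ellipticity via the quantities $[R_{11}^2]$, $[R_{11}R_{12}]$, $[R_{12}^2]$ (Theorem~\ref{p2}) is where the numerical balance $\beta(\alpha)\le h/C^2$ actually enters; your sketch does not carry this out and the phrase ``once $\varepsilon$ exceeds $\varepsilon_0$'' does not by itself show that the rotation number changes at, rather than merely somewhere beyond, $\varepsilon_0$.
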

For trigonometric polynomial  potential, one has
\begin{thm}\label{maint}
Let  $H_{\lambda f,\alpha,\theta}$ be given by \eqref{qpso} and   $E_m^-, E_m^+$  be given by \eqref{liugap}.  Suppose  $\alpha\in \mathbb{R}\setminus \mathbb{Q}$ satisfies $0\leq \beta(\alpha)<\infty$ and f is a trigonometric polynomial.
Then for any $\eta>0$, there exist $\lambda_0=\lambda_0(f,\eta,\beta(\alpha))>0$ and  $m_{\star}= m_{\star}(\lambda,f,\eta,\alpha)>0$, such that for  $|\lambda|\leq \lambda_0$, the following estimate
holds
\begin{equation*}\label{epd}
E_m^+-E_m^-\leq e^{-\eta|m|}
\end{equation*}
 for $|m|\geq m_{\star}>0$.
\end{thm}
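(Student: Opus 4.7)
The plan is to derive Theorem \ref{maint} as an immediate corollary of Theorem \ref{main}. The key observation is that a trigonometric polynomial extends to an entire function, so it is analytic on every strip $\{x\in\mathbb{C}/\mathbb{Z}:|\Im x|<h\}$ for arbitrarily large $h$. Since the exponential decay rate in Theorem \ref{main} is $h/C$ with $C$ an absolute constant, taking $h$ sufficiently large makes this rate dominate any prescribed $\eta>0$.

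Concretely, given $\eta>0$, a trigonometric polynomial $f$, and an irrational $\alpha$ with $0\le\beta(\alpha)<\infty$, set
\begin{equation*}
h=\max\bigl(C\eta,\,C^{2}\beta(\alpha)\bigr),
\end{equation*}
where $C$ is the absolute constant from Theorem \ref{main}. By construction $\beta(\alpha)\le h/C^{2}$, so the hypothesis of Theorem \ref{main} is met, and $f$ is analytic on the strip of width $h$ because it is entire. Theorem \ref{main} then furnishes $\lambda_{0}=\lambda_{0}(f,h,\beta(\alpha))>0$ and $m_{\star}=m_{\star}(\lambda,f,h,\alpha)>0$ such that for $|\lambda|\le\lambda_{0}$ and $|m|\ge m_{\star}$,
\begin{equation*}
E_{m}^{+}-E_{m}^{-}\le e^{-\frac{h}{C}|m|}\le e^{-\eta|m|}.
\end{equation*}
Because $h$ is a function of $\eta$ and $\beta(\alpha)$ only, the dependences $\lambda_{0}=\lambda_{0}(f,\eta,\beta(\alpha))$ and $m_{\star}=m_{\star}(\lambda,f,\eta,\alpha)$ claimed in Theorem \ref{maint} are inherited from those in Theorem \ref{main}.

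There is no genuine obstacle in this argument; the only point deserving comment is that for a trigonometric polynomial of degree $N$ the sup-norm on the strip of width $h$ grows like $e^{2\pi N h}$, so enlarging $h$ forces $\lambda_{0}$ to shrink. This is harmless because Theorem \ref{maint} only asks for positivity of $\lambda_{0}$, not for any quantitative lower bound uniform in $\eta$.
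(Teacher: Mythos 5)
Your argument is correct and is exactly the paper's proof, just spelled out: the paper likewise derives Theorem \ref{maint} from Theorem \ref{main} by noting that a trigonometric polynomial is entire and hence analytic on a strip of arbitrary width $h$, with $h$ chosen large enough that $h/C\ge\eta$ and $\beta(\alpha)\le h/C^2$. The dependence bookkeeping you carry out (absorbing $h$ into $\eta$ and $\beta(\alpha)$) is the right way to see that the claimed form of $\lambda_0$ and $m_\star$ is inherited.
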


For AMO, we have the following refinement.
\begin{thm}\label{main1}
Let  $H_{\lambda,\alpha,\theta}$ be an almost Mathieu operator and   $E_m^-, E_m^+$  be given by \eqref{liugap}.  Suppose  $\alpha\in \mathbb{R}\setminus \mathbb{Q}$ satisfies $0\leq \beta(\alpha)<\infty$.
Then there exists an absolute constant  $C>0 $ such that  for  any $|\lambda|\leq e^{-C\beta(\alpha)}$, the following estimate
holds
\begin{equation}\label{epd}
E_m^+-E_m^-\leq |\lambda|^{\frac{1}{C}|m|}
\end{equation}
 for $|m|\geq  m_{\star}(\lambda, \alpha)$.
\end{thm}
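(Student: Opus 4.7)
\textbf{Proof plan for Theorem \ref{main1}.}
My plan is to combine a quantitative (almost) reducibility statement for the subcritical almost Mathieu cocycle at the edge of the $m$-th spectral gap with a Moser--P\"oschel-type perturbation argument, following the general scheme of Leguil--You--Zhao--Zhou but adapted to Liouville frequencies. The smallness condition $|\lambda|\leq e^{-C\beta(\alpha)}$ is exactly what is needed so that the denominators produced by Liouville resonances $\|q_n\alpha\|_{\R/\Z}^{-1}\leq e^{q_n\beta(\alpha)}$ are dominated by the subcritical gain coming from $|\lambda|$ at each KAM step.

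First I would work at $E=E_m^\pm$, where property (ii) of the fibered rotation number gives $2\rho_{\lambda,\alpha}(E)=m\alpha \bmod \Z$. A quantitative almost-reducibility theorem for subcritical AMO should produce an analytic conjugation $B_m:\T\to \mathrm{PSL}(2,\R)$ of degree $m$ together with a constant parabolic matrix $A_m$ satisfying
\begin{equation*}
B_m(\theta+\alpha)^{-1}\,S^{\mathrm{AMO}}_{\lambda,E_m^\pm}(\theta)\,B_m(\theta)=A_m,\qquad \|B_m\|_0\leq |\lambda|^{-|m|/(2C)}.
\end{equation*}
Then I would perturb: for $E=E_m^\pm+\varepsilon$, the same conjugation yields
\begin{equation*}
B_m(\theta+\alpha)^{-1}\,S^{\mathrm{AMO}}_{\lambda,E}(\theta)\,B_m(\theta)=A_m+\varepsilon R_m(\theta),\qquad \|R_m\|_0\lesssim \|B_m\|_0^2.
\end{equation*}
The gap $G_m$ survives precisely while the fibered rotation number remains $m\alpha/2 \bmod \Z$, i.e.\ while the averaged trace of the perturbed cocycle stays in the elliptic/parabolic regime; the one-parameter Moser--P\"oschel computation then gives $|\varepsilon|\,\|B_m\|_0^2\lesssim 1$, hence
\begin{equation*}
E_m^+-E_m^-\lesssim \|B_m\|_0^{-2}\leq |\lambda|^{|m|/C}
\end{equation*}
after harmlessly enlarging $C$. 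The threshold $m_\star(\lambda,\alpha)$ is the smallest $|m|$ for which the almost-reducibility output is genuinely a constant parabolic matrix rather than an almost-constant one.

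The main obstacle, and the reason the hypothesis involves $\beta(\alpha)$, is establishing the quantitative conjugation bound $\|B_m\|_0\leq |\lambda|^{-|m|/(2C)}$ in the Liouville regime. In the Diophantine case this follows from Avila--Jitomirskaya reducibility together with the observation that a degree-$m$ analytic conjugation to a constant parabolic has analytic norm $\sim |\lambda|^{-c|m|}$, which can be realized sharply. For general $\alpha$ with $\beta(\alpha)<\infty$ I would run a KAM iteration along the best rational approximants $p_n/q_n$, inserting at each near-resonance a \emph{rational} conjugation step whose cost is $\|q_n\alpha\|_{\R/\Z}^{-O(1)}\leq e^{O(q_n)\beta(\alpha)}$, and then using the AMO-specific subcritical gain (essentially a factor $|\lambda|^{q_n}$) to absorb this cost; the hypothesis $|\lambda|\leq e^{-C\beta(\alpha)}$ is precisely the balance condition that makes the iteration converge with the stated bound. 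Once this quantitative reducibility is in place, the perturbation step is essentially algebraic and parallels the continuous-parameter argument already used in Theorem \ref{main}.
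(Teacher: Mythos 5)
Your high-level plan (quantitative reducibility of the cocycle to a parabolic constant at $E=E_m^\pm$, followed by a Moser--P\"oschel perturbation step) matches the paper's strategy, but the way you propose to extract the gap bound misidentifies what actually controls the gap size, and there is a sign error in your chain of inequalities. You assert that the Moser--P\"oschel step gives $|\varepsilon|\,\|B_m\|_0^2\lesssim 1$, and hence $E_m^+-E_m^-\lesssim \|B_m\|_0^{-2}$. But $|\varepsilon|\,\|B_m\|_0^2\lesssim 1$ is merely the regime in which the perturbation scheme converges; it does not, by itself, locate where the gap ends. What ends the gap is the ellipticity threshold of the averaged perturbed cocycle, and in the paper this threshold is explicitly $\epsilon_m=\frac{-2\mu_m[R_{11}^2]}{[R_{11}^2][R_{12}^2]-[R_{11}R_{12}]^2}$, so the gap is controlled by the parabolic parameter $\mu_m$. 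The paper shows $|\mu_m|\leq C_\star e^{-\eta n/2}$ (driven by the almost-localization rate $\eta\sim -\ln|\lambda|$ of the dual eigenfunction, with $n\asymp |m|$), while the conjugation norm is only $\|R\|\leq C_\star e^{C\beta(\alpha)n}$. Since $\eta\gg\beta(\alpha)$, it is the smallness of $\mu_m$, not $\|R\|^{-2}$, that produces $|\lambda|^{|m|/C}$. If you instead use your estimate $\|B_m\|_0\leq |\lambda|^{-|m|/(2C)}$, you get $\|B_m\|_0^{-2}\geq |\lambda|^{|m|/C}$, which is the wrong direction; to make your argument close you would need a \emph{lower} bound on $\|B_m\|_0$, i.e.\ you would have to fold the tiny $\mu_m$ into the normalization of $B_m$, and that requires estimating $\mu_m$ anyway. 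As written, your proposal does not track the one quantity that carries the localization rate $\eta$ into the gap estimate.

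Your proposed route to the quantitative reducibility itself is also genuinely different from the paper's. You suggest a KAM iteration along best rational approximants with ``rational conjugation steps'' of cost $\|q_n\alpha\|^{-O(1)}$, balanced against a subcritical gain $|\lambda|^{q_n}$, in the spirit of Avila--Fayad--Krikorian or Hou--You. The paper instead goes via Aubry duality: Lemmas~\ref{Blol1}--\ref{Blol2} (from Avila--Jitomirskaya and Liu--Yuan) give an almost-localized dual eigenfunction $\widehat u$, from which a Bloch-wave vector $\mathcal U$ is built, and the conjugation $R$ is assembled from $\Re\widehat{\mathcal U}$ or $\Im\widehat{\mathcal U}$ plus a single homological-equation step, working in a narrow strip of width $O(\beta(\alpha))$. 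This is precisely what yields both $\|R\|\leq e^{C\beta n}$ and, via the transfer-matrix growth bound Lemma~\ref{te}, the crucial decay $|\mu_m|\lesssim e^{-\eta n/2}$. A direct cocycle-space KAM scheme might in principle work, but you would still need to produce the analogue of the $\mu_m$ bound, and you have not indicated how; the localization input is where the paper gets the decay rate that becomes $|\lambda|^{|m|/C}$ after choosing $\eta=-\ln|\lambda|/C_2$ and using $|m|\leq Cn$.
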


\begin{rem}
(1)   The case of   $\beta(\alpha)=0$ in Theorem \ref{main1} with explicit $C$  was proved in   \cite{Leguil1}.\par
(2) Under the assumption $0\leq |\lambda|\leq e^{-C\beta(\alpha)}$, Liu and Yuan \cite{LYJFG} proved that there is no collapsed spectral gap, i.e., $E_m^+>E_m^-$
for any nonzero integer $m$.
\par
(3). Under the condition of Theorem \ref{main1}, we have $ |\lambda|^{\frac{1}{C}|m|}\leq e^{\frac{\ln |\lambda|}{C}|m|}$. So the
size of the spectral gap $G_m=(E_{m}^-,E_{m}^+)$ decays exponentially with respect to the label $m$.
\par
(4). We expect the optimal decay in \eqref{epd} to be $C|\lambda|^{|m|}$.

\end{rem}
As an application, we obtain
\begin{thm}\label{main2}
 Under the condition of Theorem \ref{main}, for any $\epsilon>0$, there exists $\sigma_{\star}=\sigma_{\star}(\lambda,f,\alpha,\epsilon)>0$  such that for all $E\in\Sigma_{\lambda f,\alpha}$ and $ \sigma\in(0,\sigma_{\star})$, we have
\begin{equation*}\label{hom}
\mathrm{Leb}\left((E-\sigma,E+\sigma)\cap\Sigma_{\lambda f,\alpha}\right)\geq(1-\epsilon)\sigma,
\end{equation*}
where $\mathrm{Leb}(\cdot)$ is the Lebesgue measure.
\end{thm}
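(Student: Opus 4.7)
The plan is to deduce Theorem \ref{main2} from the exponential gap bound $|G_m| \leq e^{-h|m|/C}$ (valid for $|m| \geq m_{\star}$) furnished by Theorem \ref{main}, in the spirit of the Damanik--Goldstein argument \cite{dam1,Leguil1}. Fix $\epsilon>0$, $E \in \Sigma_{\lambda f,\alpha}$, and $\sigma>0$ small. Writing
\begin{equation*}
(E-\sigma,E+\sigma) \setminus \Sigma_{\lambda f,\alpha} = \bigcup_{m\neq 0} G_m \cap (E-\sigma,E+\sigma),
\end{equation*}
the objective is to bound the Lebesgue measure of this set by $(1+\epsilon)\sigma$, which immediately yields the desired lower bound $(1-\epsilon)\sigma$ on $\mathrm{Leb}((E-\sigma,E+\sigma)\cap\Sigma_{\lambda f,\alpha})$.

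Set $c:=h/C$ and pick a cutoff $M = M(\sigma,\epsilon) := \lceil (1/c)\log(2C_1/(\epsilon\sigma))\rceil$, so that the tail of very small gaps contributes negligibly:
\begin{equation*}
\sum_{|m|>M} |G_m \cap (E-\sigma,E+\sigma)| \leq \sum_{|m|>M} |G_m| \leq C_1 e^{-cM} \leq \frac{\epsilon\sigma}{2}.
\end{equation*}
Only the finite family $\mathcal{F}_M := \{G_m : 1 \leq |m|\leq M\}$ remains. If the window meets at most one gap in $\mathcal{F}_M$, then, since $E\in\Sigma_{\lambda f,\alpha}$ lies outside every open gap, that intersection is one-sided of length at most $\sigma$, and summing with the tail yields $\sigma + \epsilon\sigma/2 \leq (1+\epsilon)\sigma$, as required.

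It thus suffices to establish a lower bound of the form $\delta_M > 2\sigma$ on the pairwise spacing $\delta_M := \min\{\mathrm{dist}(G_m,G_{m'}) : m\neq m',\ |m|,|m'|\leq M\}$. Property (ii) of the fibered rotation number assigns to $G_m$ the value $2\rho_{\lambda f,\alpha}|_{G_m} = m\alpha \bmod \mathbb{Z}$, and the definition of $\beta(\alpha)$ gives $\|(m-m')\alpha\|_{\mathbb{R}/\mathbb{Z}} \geq e^{-(\beta(\alpha)+o(1))\cdot 2M}$ for $M$ large, hence $|\rho_{\lambda f,\alpha}(G_m)-\rho_{\lambda f,\alpha}(G_{m'})| \geq \frac{1}{2}\|(m-m')\alpha\|_{\mathbb{R}/\mathbb{Z}}$. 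For weak coupling, $\rho_{\lambda f,\alpha}$ is a small perturbation of the free rotation number $\arccos(E/2)/\pi$ and, away from band edges, bilipschitz on the spectrum, so this translates into
\begin{equation*}
\delta_M \geq c_0\, e^{-2(\beta(\alpha)+o(1))M}.
\end{equation*}
The Liouville hypothesis $\beta(\alpha)\leq h/C^2$ of Theorem \ref{main}, combined with $C$ being a sufficiently large absolute constant, forces $2\beta(\alpha)<c$; hence $\delta_M \gg e^{-cM}$, and one can choose $\sigma_{\star} = \sigma_{\star}(\lambda,f,\alpha,\epsilon) > 0$ so that $\sigma<\sigma_{\star}$ implies $\sigma < \delta_{M(\sigma,\epsilon)}/2$, closing the argument.

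The main obstacle is the arithmetic-to-geometric transfer in the third paragraph. One must quantitatively propagate the lower bound on $\|k\alpha\|_{\mathbb{R}/\mathbb{Z}}$ for $|k|\leq 2M$ into a spatial spacing estimate for the gaps $G_m$; near the band edges only a H\"older-$1/2$ relation is available between $\rho_{\lambda f,\alpha}$ and $E$, and the resulting square in the exponent must be absorbed by choosing the absolute constant $C$ in Theorem \ref{main} large enough (e.g.\ $C>4$), so that $4\beta(\alpha)<c$ still holds. The needed regularity of $\rho_{\lambda f,\alpha}$ in the weak-coupling regime follows from the same Avila--Jitomirskaya reducibility \cite{AJa1} underlying Theorem \ref{main}, and the calibration $\beta(\alpha)\leq h/C^2$ is precisely what matches the arithmetic rate $e^{-\beta(\alpha)|k|}$ with the gap-decay rate $e^{-h|m|/C}$.
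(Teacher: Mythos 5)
Your proposal is essentially the same argument as the paper's. Both rest on two quantitative inputs: the exponential decay of $|G_m|$ from Theorem \ref{main}, and a spacing estimate for the gaps derived by propagating the small divisor bound $\|k\alpha\|_{\mathbb{R}/\mathbb{Z}}\ge C(\alpha)e^{-2\beta(\alpha)|k|}$ through the H\"older-$\tfrac12$ continuity of the rotation number (the paper's Lemmas \ref{hl1} and \ref{hl2}). The bookkeeping is organized a bit differently: you fix a $\sigma$-dependent cutoff $M$, bound the tail $\sum_{|m|>M}|G_m|$, and show only one gap with $|m|\le M$ can meet the window; the paper instead picks the smallest label $m_0$ among the meeting gaps and shows every other meeting gap has $|m|\gtrsim\frac{-\ln(C_\star\sigma)}{\beta(\alpha)}$, then sums. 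These are interchangeable formulations of the same idea, and your calibration (e.g.\ requiring $4\beta(\alpha)<c=h/C$, achieved by taking $C$ large) is exactly the balance the paper exploits via $\eta=C_1\beta(\alpha)$ with $C_1$ large.

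One genuine (though easily fixed) omission: the identity $(E-\sigma,E+\sigma)\setminus\Sigma_{\lambda f,\alpha}=\bigcup_{m\neq 0}G_m\cap(E-\sigma,E+\sigma)$ is not correct when the window leaves $[E_{\min},E_{\max}]$; it misses the two unbounded rays. The paper handles this with a separate case, treating $G_0=(-\infty,E_{\min})\cup(E_{\max},+\infty)$ as a gap and using the analogous spacing estimate $\mathrm{dist}(G_m,G_0)\ge c_\star e^{-8\beta(\alpha)|m|}$ (estimate (\ref{hle3})). Your scheme extends to this case by adding $G_0$ to the finite family $\mathcal{F}_M$, but you should state that explicitly, since otherwise the claim ``$E$ lies outside every open gap, so the one surviving intersection has length at most $\sigma$'' has nothing to say about the contribution of $(E-\sigma,E_{\min})$ or $(E_{\max},E+\sigma)$.
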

\begin{rem}
By letting  $E$ be a point on the  boundary of a spectral gap, we see that the lower bound  $1-\epsilon$ is optimal.
\end{rem}
 We want to explain the motivations for results, and also explain the new
challenge for the Liouville case.
Recently,   the global theory of one-frequency cocycles has been
proposed.  The spectrum of the quasi-periodic operator (or the corresponding Schr\"odinger cocycle) can be classified into three regimes:
\begin{itemize}
  \item Supercritical regime if the Lyapunov exponent is positive.
  \item  Subcritical if the corresponding  transfer matrices $A_{n}(z)$ are uniformly subexponentially
bounded through some strip  $|\Im z|\leq h$.
  \item  Critical regime  otherwise.
\end{itemize}
See \cite{Global,avila2010almost} the formal definition and generalization.
The three regimes have very important spectral features.
Roughly speaking, the (almost) reducibility in subcritical regime is the competition between $h$ and $\beta(\alpha)$ and it relates to absolutely continuous spectrum
\cite{eli,Dina,Houyou,avila2010almost,AA2008,AFK,Amor}.
The (almost) localization in supercritical regime is the competition between the positive Lyapunov exponent and resonance (it is governed by the frequency resonance $\beta(\alpha)$ and the phase resonance) and it relates to the singular continuous spectrum and the pure point spectrum \cite{Ten,Ji99,jl18,jl18r}.  The critical regime relates to the singular continuous spectrum \cite{AJM,AK06}.
 The supercritical regime and  subcritical regime can be connected by Aubry duality, and then the (almost) localization and (almost) reducibility are connected \cite{Puig,Puig06,ayzduke,Ji,LYJFG}. However, most of the previous references focus on Diophantine frequencies.
 The motivation of the results in this paper is to set up the {\it quantitative} almost reducibility by the almost localization in the dual model so that we can deal with upper bounds of  spectral gaps.  Roughly  speaking, in order to  balance  the small divisor from the frequency $\alpha$, we need the subcritical regime at least in  a strip of width $h>C^2\beta(\alpha)$ and the  upper bounds of   spectral gaps are controlled by  the decaying rate $\gamma=\frac{h}{C}$, where $C$ is a large absolute constant.
 In this paper, we do not focus on the explicit value of $C$ but it is doable. In particular,  we only need $h>0$ in the case of  weak Diophantine frequencies ($\beta(\alpha)$=0).
 It is very difficult to address the spectral gap by the approach $\beta(\alpha)\to h\to \gamma$.
  As aforementioned,  the  recent results for general analytic  potentials  are  to deal with  Diophantine frequencies  \cite{Leguil1}.
   For one dimensional case with general analytic potentials and weak Diophantine frequencies ($\beta(\alpha)=0$), the authors obtained that  $\gamma>0$ depends on the strip width $h$  in \cite{Leguil1}. We are able to give the explicit formula for $\gamma$ for any frequency with $\beta(\alpha)<\infty$, that is $\gamma=\frac{h}{C}$.
    We should mention that the results in  \cite{Leguil1} hold in higher dimensions and the explicit $\lambda_0$ is given. The most challenged part in this paper is to deal with Liouville  frequencies ($\beta(\alpha)>0$).
 The problems of   Liouville  frequencies are very  hard to deal with. The traditional KAM theory is not able to set up  the reducibility for the corresponding cocycle.
 Recently, there are several  big progresses  to deal with Liouville frequencies \cite{AFK,Houyou,LYJFG,Ten,jl18,ayzduke}. Our method is based on several combinations  of previous methods plus the   delicate quantitative estimate.
  Our  approach from $h\to \gamma $ is inspired by  \cite{Leguil1}. Some challenges related to Liouville frequencies  from $\beta\to h\to \gamma $ have been solved in \cite{LYJFG,LYJMP}, where
 the reducibility results in \cite{AJa1} were extended to    Liouville  frequencies.
Here, we obtain a more delicate and  {\it quantitative} version of the results of  \cite{LYJFG,LYJMP} in order to establish the upper bounds of  the spectral gaps.


The present paper is organized as follows. In section 2, we give some basic concepts and notations.  In section 3, we construct a conjugacy  by Aubry duality in order to reduce the cocycle. In section 4, we perturb the   cocycle near the boundary of  a spectral gap. In section 5, we complete the proofs of Theorems \ref{main}, \ref{maint}, \ref{main1} and \ref{main2}.


\section{Some basic concepts and notations}
\subsection{Cocycle and transfer matrix}
 $C_{\delta}^{\omega}(\mathbb{R}, \mathcal{B})$ be the set of all analytic mappings from $\mathbb{R}$ to some Banach space $(\mathcal{B},||\cdot||)$, which   admit an  analytic extension to the strip $|\Im z|\leq \delta$.  Denote by  $C_{\delta}^{\omega}(\mathbb{R}/\mathbb{Z}, \mathcal{B})\subset C_{\delta}^{\omega}(\mathbb{R}, \mathcal{B})$ the subspace of 1-periodic mappings. Sometimes, we omit $\delta$ for simplicity.
 By a cocycle, we mean a pair $(\alpha,A)\in (\mathbb{R}\setminus\mathbb{Q})\times C_{\delta}^{\omega}(\mathbb{R}/\mathbb{Z},{\rm SL}(2,\mathbb{R}))$
and we can regard it as a dynamical system  on $(\mathbb{R}/\mathbb{Z})\times \mathbb{R}^2$ with
\begin{equation*}
(\alpha,A):(x,v)\longmapsto (x+\alpha, A(x)v),\ (x,v)\in (\mathbb{R}/\mathbb{Z})\times \mathbb{R}^2.
\end{equation*}
For $k>0$, we
define  the $k$-step transfer matrix as
\begin{equation*}
A_k(x)=\prod\limits_{l=k}^{1}A(x+(l-1)\alpha).
\end{equation*}
\subsection{Conjugacy and reducibility}
 Given two cocycles $(\alpha, A)$ and $(\alpha, B)$ with $A,B\in C_{\delta}^{\omega}(\mathbb{R}/\mathbb{Z},{\rm SL}(2,\mathbb{R}))$, a conjugacy between them is a cocycle $(\alpha, R)$ with $R\in C_{\delta}^{\omega}(\mathbb{R}/\mathbb{Z},\text{PSL}(2,\mathbb{R}))$ such that
\begin{equation*}
R^{-1}(x+\alpha)A(x)R(x)=B(x).
\end{equation*}
We say $(\alpha,A)$ is reducible if it   conjugates  to a constant cocycle $(\alpha,B)$.

Given $R\in C^{\omega}(\mathbb{R}/\mathbb{Z},\text{PSL}(2,\mathbb{R}))$, we say the degree of $R$ is $k$ and denote dy $\text{deg}(R)=k$,
if $R$ is homotopic to $  R_{\frac{k}{2}x}$ for some $k\in\mathbb{Z}$, where
\begin{equation*}
R_{x}=\left[
 \begin{array}{cc}
 \cos{2\pi x}&-\sin{2\pi x}\\
\sin{2\pi x}&\cos{2\pi x}\end{array}
\right].
\end{equation*}

\subsection{The fibered rotation number }
  Suppose $A\in C^{\omega}(\mathbb{R}/\mathbb{Z}, \text{SL}(2,\mathbb{R}))$ is homotopic to the identity.
  Then  the fibered rotation number $\rho_{\alpha,A}$ of the cocycle $(\alpha,A)$ is well defined. We refer to papers \cite{AJa1,Leguil1} for the  definition of
  the fibered rotation number.
If $A, B:\mathbb{R} / \mathbb{ Z}\rightarrow \text{SL}(2,\mathbb{R})$   and
$ R:\mathbb{R}/ \mathbb{Z}\rightarrow \text{PSL}(2,\mathbb{R})$   are such
that $A $ is  homotopic to the identity and $ R^{-1}(x+\alpha)A(x)R(x) =B$, then   $ B$ is  homotopic to the identity
and
\begin{equation}\label{pr2}
2\rho(\alpha,A)-2\rho(\alpha,B)=\deg{(R)}\alpha.
\end{equation}
Moreover, there is  some absolute constant $C>0$ such that
\begin{equation}\label{rr}
|\rho(\alpha,A)-\theta|\leq C\sup_{x\in\mathbb{R}/\mathbb{Z}}||A(x)-R_\theta||.
\end{equation}
  In this paper, we consider the Schr\"odinger  cocycle $(\alpha,S_{\lambda f,E})$, where \begin{equation*}
S_{\lambda f,E}(x)=\left[
 \begin{array}{cc}
 E-\lambda f(x)&-1\\
1&0\end{array}
\right].
\end{equation*}
If $f=2\cos(2\pi x)$, we call $(\alpha,S_{\lambda f,E})$ an almost Mathieu cocycle which is denoted by $(\alpha,S_{\lambda,E})$ for simplicity. It is easy to see that  $S_{\lambda f,E}$ is  homotopic to the identity.
Thus the
     fibered rotation number of $(\alpha,S_{\lambda f,E})$ is well defined and denoted by $\rho_{\lambda f,\alpha}(E)$ ($\rho_{\lambda ,\alpha}(E)$ for AMO).
     \subsection{Aubry Duality} For Schr\"odinger operator $H_{\lambda f,\alpha,\theta}$, we define the  dual Schr\"odinger operator   by  $\widehat{H}_{\lambda f,\alpha,\theta}$,
\begin{equation*}
(\widehat{H}_{\lambda f,\alpha,\theta}x)_n=\sum_{k\in\mathbb{Z}}\lambda \widehat{f}_kx_{n-k}+2\cos 2\pi(\theta+n\alpha)x_n,
\end{equation*}
where $\widehat{f}_k$ is the Fourier coefficient of the  potential $f$. Note that the spectrum of  $\widehat{H}_{\lambda f,\alpha,\theta}$ is equal to $\Sigma_{\lambda f,\alpha}$.

 Aubry duality expresses an algebraic relation between the families of
operators $ \{\widehat{H} _{\lambda f,\alpha,\theta}\}_{\theta\in\mathbb{R}} $  and $ \{ {H} _{\lambda f,\alpha,\theta}\}_{\theta\in\mathbb{R}} $
by  Bloch waves, i.e., if
 $ u:\mathbb{R}/\mathbb{Z}\rightarrow\mathbb{C}$ is an $L^2$  function whose Fourier coefficients  $\widehat u$  satisfy
  $ \widehat{H} _{\lambda f,\alpha,\theta}\hat{u}=E\hat{u}$, then
  $\mathcal{U}(x)= \left(
           \begin{array}{c }
             e^{2\pi i \theta }u(x) \\
             u(x-\alpha)\\
           \end{array}
         \right)
  $
satisfies
\begin{equation}\label{dualre}
S_{\lambda f,E}(x)\cdot \mathcal{U}(x)=e^{2\pi i \theta}\mathcal{U}(x+\alpha).
\end{equation}

\subsection{Some notations and assupmtions}
We briefly comment on  the constants and norms in the following proofs. We assume $\alpha$ is irrational and $\lambda\neq 0$.
 We let $C$ (resp. $c$) be large (resp. small) positive absolute constant and $C$ (resp. $c$) may be different even in the same formula.
   $C_2$ (resp. $C_1$) denotes a fixed (resp. any)   constant, which is larger than all the constants $C,c^{-1}$ appearing in this paper.

Let $C(\alpha)$ be  a large constant depending  on $\alpha$ (and $f$)  and $C_{\star}$ (resp. $c_{\star}$)  be a large (resp. small) constant   depending on $\lambda,f$ and $\alpha$. Define for $\delta\geq 0$ the strip $\Delta_{\delta}=\{z\in\mathbb{C}/\mathbb{Z}: |\Im{z}|\leq \delta\}$ and let $||v||_{\delta}=\sup\limits_{\delta\in\Delta_s}||v(z)||$, where $v$ is a mapping from $\Delta_{\delta}$ to some Banach space $(\mathcal{B},||\cdot||)$.
 For any mapping $v$ defined on $\mathbb{R}/\mathbb{Z}$, we let $[v]=\int_{\mathbb{R}/\mathbb{Z}}v(x)\mathrm{d}x$. In this paper, $ \mathcal{B}$ may be
$\mathbb{C}$, $ \mathbb{C}^2$ or $\text{SL}(2,\mathbb{C})$.

\section{The construction of reducibility by Aubry duality}

 In order to state our reducibility result, we introduce some Lemmas first.

\begin{lem}[Theorem 3.3,  \cite{AJa1}]\label{Blol1}
Let $E\in\Sigma_{\lambda f,\alpha}$. Then there exist some $\theta=\theta(E)\in\mathbb{R}/\mathbb{Z}$  and $\widehat{u}=\{\widehat{u}_k\}_{k\in\mathbb{Z}}$ with $\widehat{u}_0=1, |\widehat{u}_k|\leq1 $ such that $\widehat{H}_{\lambda f,\alpha,\theta}\widehat{u}=E\widehat{u}$.
\end{lem}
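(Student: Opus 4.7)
The plan is to construct $\widehat{u}$ as a limit of Bloch-type eigenvectors for periodic approximations of $\alpha$, exploiting the Aubry duality relation \eqref{dualre}. Since $\Sigma_{\lambda f,\alpha}$ coincides with the spectrum of $\widehat{H}_{\lambda f,\alpha,\theta}$ for every $\theta$, the task reduces to exhibiting both a phase $\theta$ and a bounded generalized eigenfunction $\widehat{u}$ at the given energy $E$.

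First I would approximate $\alpha$ by its continued-fraction convergents $p_n/q_n$. For each rational frequency $\alpha_n = p_n/q_n$, Floquet theory applied to the $q_n$-periodic operator $H_{\lambda f,\alpha_n,\theta_n}$ yields exact Bloch solutions of the form $\psi^{(n)}_k = e^{2\pi i k \theta_n}\, u_n(k\alpha_n)$ for some real-analytic $u_n$ on $\mathbb{R}/\mathbb{Z}$. Continuity of the spectrum in the frequency permits a choice of $\theta_n$ and $E_n \to E$ with $E_n \in \sigma(H_{\lambda f,\alpha_n,\theta_n})$, so that each $\psi^{(n)}$ is a genuine eigenfunction.

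Next, applying Aubry duality to these Bloch waves, the Fourier coefficients $\widehat{u}^{(n)} = \{\widehat{u}^{(n)}_k\}_{k\in\mathbb{Z}}$ satisfy $\widehat{H}_{\lambda f,\alpha_n,\theta_n}\widehat{u}^{(n)} = E_n\,\widehat{u}^{(n)}$. After a translation of the index and a rescaling, I may arrange $\sup_k |\widehat{u}^{(n)}_k| = |\widehat{u}^{(n)}_0| = 1$. A diagonal compactness argument then extracts a subsequence with $\widehat{u}^{(n)}_k \to \widehat{u}_k$ pointwise, $\theta_n \to \theta$, and $E_n \to E$; the eigenvalue equation passes to the limit by dominated convergence, using the exponential decay of $\widehat{f}_k$ (since $f$ is analytic) and the uniform bound $|\widehat{u}^{(n)}_k| \leq 1$.

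The main obstacle is preserving the normalization $\widehat{u}_0 = 1$ under passage to the limit: one has to verify that, after translating the index so that the supremum of $|\widehat{u}^{(n)}_k|$ is attained at $k=0$, the sequence of shifted phases $\theta_n$ still converges and that the supremum is not ``lost at infinity.'' Here one exploits the fact that $|\widehat{u}^{(n)}_k|$ is essentially $q_n$-periodic in $k$, so the translation can always be chosen within a controlled window after suitably redefining the spectral parameters, and hence the limit remains nontrivial with $\widehat{u}_0 = 1$ and $|\widehat{u}_k| \leq 1$ for all $k$.
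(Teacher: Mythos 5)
The paper does not prove this lemma itself; it cites it verbatim as Theorem 3.3 of Avila--Jitomirskaya \cite{AJa1}, so there is no internal proof to compare against. Your proposal follows the same standard route as that reference: approximate $\alpha$ by convergents $p_n/q_n$, use Floquet theory for the resulting periodic operators, pass the normalization through the limit by a compactness argument, and use Aubry duality to land on the dual eigenvalue equation. One point is worth tightening. It is cleaner to apply Floquet theory directly to the dual operator $\widehat{H}_{\lambda f, p_n/q_n,\theta}$, which is itself $q_n$-periodic (the cosine potential is $q_n$-periodic and the long-range hopping $\lambda\widehat{f}_k$ is translation invariant, with $\widehat{f}_k$ decaying exponentially since $f$ is analytic). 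Its Bloch solutions $\widehat{u}^{(n)}_k=e^{2\pi i k\kappa_n}w^{(n)}_k$ with $w^{(n)}$ a $q_n$-periodic sequence are exact bounded solutions of $\widehat{H}_{\lambda f,\alpha_n,\theta_n}\widehat{u}^{(n)}=E_n\widehat{u}^{(n)}$ whose modulus is \emph{literally} $q_n$-periodic, so your normalization step (translate so that the maximum of $|\widehat{u}^{(n)}_k|$ sits at $k=0$, which only shifts $\theta_n$ by a multiple of $\alpha_n$, then scale and rotate so that $\widehat{u}^{(n)}_0=1$) goes through transparently. If instead one takes, as you write, the Fourier coefficients of the periodic part of a Bloch wave of the \emph{original} operator $H$, then expanded as a trigonometric polynomial those coefficients are compactly supported, not $q_n$-periodic, and the dual eigenvalue equation is then only a discrete-Fourier (mod $q_n$) identity rather than a pointwise one; to make your ``essentially $q_n$-periodic'' step and the passage to the limit rigorous one must pass to the $q_n$-periodic extension of the discrete Fourier coefficients, after which the circular convolution identity $\sum_{l\in\mathbb{Z}/q_n\mathbb{Z}}\widetilde{f}_l\widehat{u}_{j-l}=\sum_{l\in\mathbb{Z}}\widehat{f}_l\widehat{u}_{j-l}$ (for $q_n$-periodic $\widehat{u}$) recovers the exact equation. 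With that fix, the argument is sound and is indeed the one behind the cited result.
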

Suppose $\eta$ satisfies
\begin{equation*}
\eta>C_1\beta(\alpha),
\end{equation*}
where $C_1$ is a large absolute constant.

\begin{lem}[Theorems 4.7 and  5.2, \cite{LYJFG}]\label{Blol2}
Suppose  $\alpha\in \mathbb{R}\setminus \mathbb{Q}$ satisfies $0\leq \beta(\alpha)<\infty$.
Then there exists an absolute constant  $C_2>0 $  such that if $f$ is analytic on the strip $\Delta_{C_2\eta}$,  then there exists  $\lambda_0(f,\eta,\alpha)>0$ (depending only on $f, \eta,\alpha$) such that if $0<|\lambda|\leq \lambda_0(f,\eta,\alpha)$ and $E\in\Sigma_{\lambda f,\alpha}$ with $2\rho_{\lambda f,\alpha}(E)= m\alpha  \mod \mathbb{Z}$,  then there is some $\widetilde{n}\in\mathbb{Z}$ such that
\begin{equation}\label{Bloe1}
 2\theta(E)=\widetilde{n}\alpha\ \mod{\mathbb{Z}},
\end{equation}
and for $|m|\geq m_{\star}$ \footnote{ Recall that $m_{\star}$ is a large constant depending on  $\lambda,f$ and $\alpha$.\label{mstar}}
\begin{equation}\label{p1e3}
|m|\leq C |\widetilde{n}|.
\end{equation}
Moreover,
\begin{equation}\label{ale}
|\widehat{u}_k|\leq C_{\star}e^{-2\pi \eta|k|}, \ \ \mbox{for $|k|\geq 3|\widetilde{n}|$},
\end{equation}
where $\theta(E)$ and $ \{\widehat{u}_k\}$ are given by  Lemma \ref{Blol1}. In particular, $\lambda_0=e^{-C_2\eta}$ for AMO.
\end{lem}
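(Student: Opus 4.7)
The plan is to combine Aubry duality with a quantitative version of Avila--Jitomirskaya's almost localization, adapted to Liouville $\alpha$, essentially the route pioneered in \cite{AJa1} and pushed through the Liouville barrier in \cite{LYJFG}. The Bloch wave $\widehat u$ and phase $\theta=\theta(E)$ furnished by Lemma~\ref{Blol1} are the starting data, and via (\ref{dualre}) the vector-valued function $\mathcal{U}(x)=(e^{2\pi i\theta}u(x),u(x-\alpha))^\top$, with $u(x)=\sum_k \widehat u_k e^{2\pi i k x}$, is an $L^2$ invariant section of the Schr\"odinger cocycle $(\alpha,S_{\lambda f,E})$ with frequency $\theta$. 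Because $|\lambda|$ is small and $f$ is analytic on $\Delta_{C_2\eta}$, the dual operator $\widehat H_{\lambda f,\alpha,\theta}$ lies in the supercritical regime: its Lyapunov exponent is at least $\log(1/|\lambda|)-O(1)$, comfortably larger than $2\pi C_1\eta$ after choosing $\lambda_0(f,\eta,\alpha)$ small.

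First, I would run the almost-localization machinery for $\widehat H$. The hypothesis $\eta>C_1\beta(\alpha)$ is precisely what is needed to dominate the small divisors produced by Liouville $\alpha$ when one inverts finite-volume truncations via the Cramer rule / block-resolvent argument. The outcome is a dichotomy on the Fourier coefficients: there is at most one resonant integer $\widetilde n$, defined by exceptional smallness of $\|2\theta-\widetilde n\alpha\|_{\mathbb R/\mathbb Z}$, such that $|\widehat u_k|\leq C_\star e^{-2\pi\eta|k|}$ outside small neighbourhoods of $0$ and $\widetilde n$. In particular, for $|k|\geq 3|\widetilde n|$ one is outside both resonance balls and (\ref{ale}) follows.

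Next, I would upgrade the \emph{near}-resonance $\|2\theta-\widetilde n\alpha\|$ small to the \emph{exact} arithmetic identity $2\theta(E)=\widetilde n\alpha\mod\mathbb Z$ of (\ref{Bloe1}). This is the Avila--Jitomirskaya rigidity step: the exponential concentration of $\widehat u$ near $\{0,\widetilde n\}$ allows one to build from $\mathcal{U}$ an analytic conjugacy of $(\alpha,S_{\lambda f,E})$ to a constant rotation of angle $\theta(E)$ on the strip $\Delta_{\eta/C}$, with a quantitative degree computable from the support of $\widehat u$. If $2\theta-\widetilde n\alpha$ were nonzero, this conjugacy could be perturbed to change the rotation number in a way inconsistent with $E$ lying in the spectrum (as in Puig's argument), giving a contradiction. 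The bound $|m|\leq C|\widetilde n|$ in (\ref{p1e3}) then comes from combining the gap labelling $2\rho_{\lambda f,\alpha}(E)=m\alpha\mod\mathbb Z$ with (\ref{pr2}) applied to the conjugacy just constructed, whose degree is controlled by $\widetilde n$ thanks to the exponential tails.

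The main obstacle is the quantitative almost localization at the sharp rate $2\pi\eta$ under only $\beta(\alpha)\leq \eta/C_1$: one must run the Green-function estimates along a subsequence of good scales (essentially continued-fraction denominators of $\alpha$) where $\|k\alpha\|_{\mathbb R/\mathbb Z}$ does not collapse, and carefully exclude phase resonances other than $\widetilde n$. Diophantine arguments fail here because denominators of $\alpha$ grow super-polynomially, so the Lyapunov budget $\log(1/|\lambda|)$ and the analyticity budget $C_2\eta$ must both be consumed in a controlled way; this is where the threshold $\lambda_0=e^{-C_2\eta}$ in the AMO case enters naturally. Once these estimates are in place, the arithmetic conclusion and the bound $|m|\leq C|\widetilde n|$ follow by the soft degree / rotation-number bookkeeping sketched above.
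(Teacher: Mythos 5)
The paper does not actually prove this lemma: the remark immediately following it records that the proof for AMO is in \cite{LYJFG} (Theorems 4.7 and 5.2) and that the extension to general QPS follows \cite{LYJMP}. Your sketch is, at the level of architecture, aligned with what those references do: Aubry duality plus Avila--Jitomirskaya almost localization pushed through the Liouville barrier, with (\ref{ale}) coming from Green-function/resonance estimates run along good continued-fraction scales, and (\ref{p1e3}) from degree/rotation-number bookkeeping. That part of the proposal is consistent with the cited route. (One small caveat: for general QPS the dual $\widehat H_{\lambda f,\alpha,\theta}$ is a long-range operator, so ``its Lyapunov exponent'' is not literally defined; the role you assign it is played instead by decay rates of finite-volume Green functions, which is what \cite{LYJMP} controls.)

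The step you give for (\ref{Bloe1}), however, does not work as stated. You argue that if $2\theta(E)-\widetilde n\alpha\neq 0$ then ``this conjugacy could be perturbed to change the rotation number in a way inconsistent with $E$ lying in the spectrum (as in Puig's argument).'' Puig's perturbation of $E$ at a gap edge proves gap opening from a \emph{parabolic} reduced matrix; it does not produce rationality of $\theta(E)$. If $2\theta-\widetilde n\alpha$ were a nonzero elliptic angle, the cocycle would be elliptically reducible and a whole neighbourhood of $E$ would lie in $\Sigma_{\lambda f,\alpha}$ --- there is no contradiction with $E\in\Sigma_{\lambda f,\alpha}$ to be had. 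What actually yields (\ref{Bloe1}) (and this is the argument in \cite{Ten,LYJFG}) is a direct computation, not a contradiction: the Bloch pair $\mathcal{U},\overline{\mathcal{U}}$ produces a conjugacy to the rotation $R_{\theta-\widetilde n\alpha/2}$, the degree formula (\ref{pr2}) gives $2\rho_{\lambda f,\alpha}(E)-\bigl(2\theta-\widetilde n\alpha\bigr)=\deg(R)\,\alpha \bmod \mathbb{Z}$, and combining with the hypothesis $2\rho_{\lambda f,\alpha}(E)=m\alpha\bmod\mathbb{Z}$ and the irrationality of $\alpha$ forces $2\theta(E)\in\mathbb{Z}\alpha+\mathbb{Z}$. (Equivalently, (\ref{Bloe1}) can be read off the duality relation between $\theta(E)$ and the integrated density of states.) Replacing your perturbation step by this direct degree/duality computation closes the gap in the sketch and simultaneously delivers (\ref{p1e3}), since $\deg(R)$ is explicitly of size $\widetilde n$ from the construction of the conjugacy.
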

\begin{rem}
The  proof of this lemma for  AMO can be found in \cite{LYJFG}.  It is easy to extend this result to general QPS following the arguments in \cite{LYJMP}. 
\end{rem}

In the following, we fix $\lambda_0$ as in  Lemma \ref{Blol2}.
In order to avoid the repetition, we only give the proof of $\beta(\alpha)>0$. Actually, the proof of $\beta(\alpha)=0$ is much easier.

From now on,
we focus on a  specific  gap $G_m$.
Let   $E=E_m^+\in \Sigma_{\lambda f,\alpha}$  and  $A^{E}(x)=S_{\lambda f,E}(x)$ (sometimes we omit   dependence on  $\lambda$ and $f$ for simplicity).
 We will reduce $(\alpha,A^E)$ to a parabolic matrix $\left[
                                                                                          \begin{array}{cc}
                                                                                            \pm 1 & \mu \\
                                                                                            0 & \pm 1 \\
                                                                                          \end{array}
                                                                                        \right]
 $.
 In order to  study the size of  spectral gap by reducibility, we will set up subtle  estimates on the coefficient $\mu$ and the conjugacy.
  We attach $E$ with $\theta(E)$ and find the localized  solution for the Aubry dual operator.
Then we  use the localized solution given by \eqref{ale} to construct conjugacies which reduce the cocycle.
We always assume the conditions in Lemma \ref{Blol2} are satisfied  so that
\begin{equation*}
n=|\widetilde{n}|<\infty.
\end{equation*}

Our main theorem in this section is
\begin{thm}\label{p1}
Suppose
$0<|\lambda|\leq \lambda_0$. Then for $E=E_m^+$,   there exists $R(x)\in C_{20\beta}^{\omega}(\mathbb{R}/\mathbb{Z},{\rm PSL}(2,\mathbb{R}))$
 such that
\begin{equation}\label{Red}
R^{-1}(x+\alpha)A^{E}(x)R(x) =\left[\begin{array}{cc}\pm1&\mu_m\\
0&\pm1\end{array}\right],
\end{equation}
where
\begin{equation}\label{p1e1}
|\mu_m|\leq C_{\star}e^{-\frac{\eta}{2} n},
\end{equation}
and
\begin{equation}\label{p1e2}
||R||_{20\beta(\alpha)}\leq  C_{\star}e^{C\beta(\alpha)n}.
\end{equation}

\end{thm}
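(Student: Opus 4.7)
The plan is to construct the conjugacy $R$ from the Aubry-dual Bloch wave supplied by Lemmas \ref{Blol1}--\ref{Blol2}, and then read off $\mu_m$ from the resulting normal form. First apply Lemma \ref{Blol1} at $E=E_m^+$ for an $L^2$ dual eigenvector $\widehat u$ with $\widehat u_0=1$, $|\widehat u_k|\leq 1$, at phase $\theta=\theta(E)$; then invoke Lemma \ref{Blol2} for the integer $\widetilde n$ with $2\theta\equiv\widetilde n\alpha\bmod\mathbb Z$, the size relation $|m|\leq C|\widetilde n|$, and the decay $|\widehat u_k|\leq C_\star e^{-2\pi\eta|k|}$ for $|k|\geq 3n$. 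Set $u(x):=\sum_k\widehat u_k e^{2\pi ikx}$ and split the series at $|k|=3n$: the low-frequency part is bounded on $\Delta_{20\beta(\alpha)}$ using only $|\widehat u_k|\leq 1$ by roughly $Cn\cdot e^{120\pi\beta(\alpha)n}\leq C_\star e^{C\beta(\alpha)n}$, while the high-frequency part is $O(1)$ since the hypothesis $\eta>C_1\beta(\alpha)$ makes $\sum_{|k|>3n}e^{(40\pi\beta(\alpha)-2\pi\eta)|k|}$ summable. Hence $\|u\|_{20\beta(\alpha)}\leq C_\star e^{C\beta(\alpha)n}$, and by Aubry duality \eqref{dualre} the vector $\mathcal U(x):=\bigl(e^{2\pi i\theta}u(x),\,u(x-\alpha)\bigr)^\top$ is a Bloch eigenvector satisfying $A^E(x)\mathcal U(x)=e^{2\pi i\theta}\mathcal U(x+\alpha)$.

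Next I would remove the rotation. Writing $e^{2\pi i\theta}=\epsilon e^{\pi i\widetilde n\alpha}$ with $\epsilon\in\{\pm1\}$ and setting $\widetilde{\mathcal U}(x):=e^{\pi i\widetilde n x}\mathcal U(x)$ yields $A^E(x)\widetilde{\mathcal U}(x)=\epsilon\widetilde{\mathcal U}(x+\alpha)$; the factor $e^{\pi i\widetilde n x}$ is $2$-periodic when $\widetilde n$ is odd, which is harmless in $\text{PSL}(2,\mathbb R)$. Decomposing $\widetilde{\mathcal U}=v+iw$ into real and imaginary parts produces two real analytic solutions $A^E v=\epsilon v(\cdot+\alpha)$ and $A^E w=\epsilon w(\cdot+\alpha)$, both inheriting the bound $C_\star e^{C\beta(\alpha)n}$ on $\Delta_{20\beta(\alpha)}$.

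The conjugacy is then built by a Wronskian dichotomy. The determinant $d:=\det[v,-w]$ is $\alpha$-invariant (since $\det A^E=1$) and hence a real constant. If $d\neq 0$ (the collapsed-gap case), take $R:=|d|^{-1/2}[v,-w]\in\text{SL}(2,\mathbb R)$; this gives $R^{-1}(\cdot+\alpha)A^E R=\epsilon I$ and $\mu_m=0$. If $d=0$ (the open-gap case), $v$ and $w$ are pointwise parallel, and an ergodicity argument forces $w=cv$ for a real constant $c$, so that after rescaling $v$ is a single real analytic eigenvector. The second column of $R$ is then defined either as $Jv/\|v\|^2$ with $J=\bigl(\begin{smallmatrix}0&-1\\ 1&0\end{smallmatrix}\bigr)$ when $v$ is nonvanishing on $\Delta_{20\beta(\alpha)}$ (which holds for $|\lambda|$ small, since the $\lambda=0$ approximation of $\widetilde{\mathcal U}$ is a nonvanishing exponential), or, in general, by solving the inhomogeneous cohomological equation $A^E\widetilde w-\epsilon\widetilde w(\cdot+\alpha)=\epsilon\mu_m v(\cdot+\alpha)$; the parameter $\mu_m$ is determined uniquely by the solvability condition, and $R:=[v,\widetilde w]\in\text{PSL}(2,\mathbb R)$.

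The quantitative bounds \eqref{p1e1}--\eqref{p1e2} then reduce to controlled estimates. Bound \eqref{p1e2} follows by combining the norm control on $v,w$ with a uniform lower bound on $\|v\|$ (or on $|d|$) on the strip. For \eqref{p1e1}, I would extract $\mu_m$ as an integral that couples only to the Fourier modes of $\widehat u$ of index comparable to or larger than $n/2$, so that \eqref{ale} contributes a factor $e^{-\pi\eta n}$, ample to absorb the low-mode growth $e^{C\beta(\alpha)n}$ of $\|R\|$ and to reach the weaker target $e^{-\eta n/2}$. The main obstacle I anticipate is the open-gap analysis: controlling the real Bloch wave $v$ from below on $\Delta_{20\beta(\alpha)}$ so that $R$ is well-defined and analytic, and identifying $\mu_m$ in a form whose exponential smallness is manifestly inherited from \eqref{ale} rather than obscured by the weaker pointwise bound $|\widehat u_k|\leq 1$ at low frequencies.
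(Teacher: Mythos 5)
Your overall framework (Aubry dual Bloch wave, normalization $e^{\pi i\widetilde n x}$, real and imaginary parts, build a triangularizing conjugacy) matches the paper's, but two essential steps are missing or replaced by heuristics that would not carry the required quantitative content.

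First, the lower bound on the Bloch wave, which you correctly single out as the main obstacle, is not actually resolved. The paper's Lemma \ref{use} handles it as follows: from $\widehat u_0=1$ one gets the integral identity
$\|\int_{\mathbb R/2\mathbb Z}(e^{-\widetilde n\pi ix}\Re\widehat{\mathcal U}(x)+ie^{-\widetilde n\pi ix}\Im\widehat{\mathcal U}(x))\,dx\|=2\sqrt2$, so one of $\Re\widehat{\mathcal U}$, $\Im\widehat{\mathcal U}$ (call it $\mathcal V$) has $\|\int e^{-\widetilde n\pi ix}\mathcal V\|\geq\sqrt2$. If $\mathcal V$ were smaller than $c_\star e^{-C\beta n}$ at a single point of $\Delta_{40\beta}$, the transfer-matrix growth control of Lemma \ref{te}, $\sup_{0\leq k\leq e^{\eta n}}\|A_k^E\|_\eta\leq C_\star e^{C\beta n}$, together with the dual eigenequations \eqref{se2}--\eqref{se3}, would force $\mathcal V$ to be uniformly small on the whole horizontal line through that point, contradicting the integral lower bound. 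Your proposed alternatives — a $\lambda\to0$ perturbative approximation by a nonvanishing exponential, or falling back on an unproven ``ergodicity forces $w=cv$'' claim and an inhomogeneous cohomological equation — do not produce the needed quantitative bound $\inf_{\Delta_{40\beta}}\|\mathcal V\|\geq c_\star e^{-C\beta n}$, which is exactly what makes $R^{(1)}(x)=[\mathcal V(x),\, T\mathcal V(x)/\|\mathcal V(x)\|^2]$ well-defined with $\|R^{(1)}\|,\|(R^{(1)})^{-1}\|\leq C_\star e^{C\beta n}$.

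Second, the estimate \eqref{p1e1} on $\mu_m$ is obtained in the paper by a quite different and more robust mechanism than the Fourier-coupling heuristic you sketch. Once \eqref{Red} holds, iterating gives $R^{-1}(x+l\alpha)A_l^E(x)R(x)=\left[\begin{smallmatrix}\pm1&l\mu_m\\0&\pm1\end{smallmatrix}\right]$ for every $l\in\mathbb N$. Choosing $l_0=\lfloor e^{3\eta n/4}\rfloor$ and using Lemma \ref{te} together with $\|R^{\pm1}\|_{20\beta}\leq C_\star e^{C\beta n}$ yields $l_0|\mu_m|\leq\|R^{-1}\|\,\|A^E_{l_0}\|\,\|R\|\leq C_\star e^{C\beta n}$, hence $|\mu_m|\leq C_\star e^{C\beta n-3\eta n/4}\leq C_\star e^{-\eta n/2}$ since $\eta>C_1\beta(\alpha)$. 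Your proposal to identify $\mu_m$ as an integral that ``couples only to Fourier modes of index $\gtrsim n/2$'' is not justified: $\mu_m=[\nu]$ after the triangularization, and the zero mode of $\nu$ involves products of $\mathcal V$, $T\mathcal V/\|\mathcal V\|^2$ and $A^E$, which mixes all frequencies; without the iteration argument there is no clean cancellation. You also need the intermediate homological equation $\pm\phi(x+\alpha)\mp\phi(x)=\nu(x)-[\nu]$ (solved via the small divisor bound \eqref{small}, shrinking the strip from $40\beta$ to $20\beta$) to pass from $R^{(1)}$ to the final $R$ with constant off-diagonal entry; this step is absorbed into your second-column construction but is where the $20\beta$ strip and the $C_\star e^{C\beta n}$ bound on $R$ actually come from. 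Finally, the Wronskian dichotomy $d\neq0$ versus $d=0$ is conceptually consistent but unnecessary: the paper's construction works uniformly, and the case $\mu_m=0$ (collapsed gap) is dismissed afterwards via Proposition 18 of \cite{Puig06} rather than treated by a separate conjugacy.
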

\begin{rem}\label{reliu}
Actually $R$ in Theorem \ref{p1} depends on the label $m$. We ignore the dependence for simplicity.
By some results   in \cite{LYJFG,Leguil1}, we can say more about $\mu_m$,
\begin{description}
  \item [(i)]For general analytic potential $f$, $\mu_m$ may be equal to zero. By Proposition 18 in \cite{Puig06},  the gap $G_m$ is collapsed for $\mu_m=0$ and there is nothing to prove in this case. Thus we assume $\mu_m\neq0$ in the following.
  \item  [(ii)] If $E=E_m^+$ and $\mu_m\neq0$, then the  reduced matrix  can only be
 $ \left[\begin{array}{cc}1&\mu_m\\
0& 1\end{array}\right] $
with $ \mu_m>0$
or
$\left[\begin{array}{cc}-1&\mu_m\\
0& -1\end{array}\right]$
with $ \mu_m<0$ [Theorem 6.1, \cite{LYJFG}].
\end{description}
\end{rem}
 In \cite{LYJFG}, Liu and Yuan got the reducibility \eqref{Red} for AMO  without the estimates of \eqref{p1e1} and \eqref{p1e2}. Thus,
the strategy of the proof of Theorem \ref{p1} is to follow the arguments of Liu and Yuan with quantitative analysis.
For simplicity, we omit the dependence on $m$  in the proof of (\ref{Red}) and (\ref{p1e1}) in this section.

Here, we  give another  lemma, which controls the growth of the cocycle.
\begin{lem}[Theorem 5.1, \cite{LYJMP}]\label{te}
Suppose
$ |\lambda|\leq \lambda_0$. Then
\begin{equation}\label{liu3}
\sup_{0\leq k\leq  e^{\eta n}}||A^{E}_k||_{\eta}\leq C_{\star}e^{C\beta(\alpha)n}.
\end{equation}
\end{lem}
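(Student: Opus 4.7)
The approach is to use Aubry duality to diagonalize the cocycle $(\alpha,A^E)$ explicitly by means of Bloch waves, and then to estimate the three factors of this diagonalization on the strip $\Delta_\eta$. The key structural observation is that the $k$-dependence in the diagonalization sits inside a diagonal factor that is unitary on the real line (since $\theta(E)$ is real), so the bound will be essentially independent of $k$ up to the tolerated range.

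\textbf{Construction of the Bloch wave and its complex conjugate.} By Lemma \ref{Blol1}, to $E$ we can attach a phase $\theta(E)$ and a dual eigenvector $\{\hat u_k\}_{k\in\mathbb{Z}}$ with $\hat u_0=1$, $|\hat u_k|\leq 1$, and, by Lemma \ref{Blol2}, the decay $|\hat u_k|\leq C_\star e^{-2\pi\eta|k|}$ for $|k|\geq 3n$. Set
$$u(x)=\sum_{k\in\mathbb{Z}}\hat u_k e^{2\pi ikx}, \qquad \mathcal{U}(x)=\begin{pmatrix} e^{2\pi i\theta}u(x)\\ u(x-\alpha)\end{pmatrix}.$$
The Aubry duality identity \eqref{dualre} reads $A^E(x)\mathcal{U}(x)=e^{2\pi i\theta}\mathcal{U}(x+\alpha)$; iterating gives $A^E_k(x)\mathcal{U}(x)=e^{2\pi ik\theta}\mathcal{U}(x+k\alpha)$. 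Since $A^E$ has real entries, $\overline{\mathcal{U}}(x):=\overline{\mathcal{U}(\bar x)}$ provides a second invariant direction with eigenvalue $e^{-2\pi i\theta}$.

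\textbf{Diagonalization formula and upper factor.} Put $B(x):=[\mathcal{U}(x),\overline{\mathcal{U}}(x)]$ and $D_k:=\mathrm{diag}(e^{2\pi ik\theta},e^{-2\pi ik\theta})$. Then
$$A^E_k(x)=B(x+k\alpha)\,D_k\,B(x)^{-1},\qquad \|D_k\|_\eta=1,$$
so $\|A^E_k\|_\eta\leq C\|B\|_\eta^2/|W|$, where $W(x):=\det B(x)$. A direct check shows $W$ is $(\alpha,\cdot)$-invariant, hence a constant, and a Fourier computation using $2\theta\equiv \tilde n\alpha \bmod\mathbb{Z}$ gives
$$W=2i\sum_{k\in\mathbb{Z}}|\hat u_k|^2 \sin\bigl(\pi(\tilde n+2k)\alpha\bigr).$$
For the upper bound on $\|B\|_\eta$, split the Fourier series at $|k|=3n$: by \eqref{ale} the tail is absolutely convergent, while the (at most $6n$) low modes $|k|<3n$ each contribute at most $e^{2\pi\eta|k|}\leq e^{6\pi\eta n}$ on $\Delta_\eta$, yielding $\|B\|_\eta\leq C_\star e^{C\eta n}$.

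\textbf{Main obstacle and the range $0\leq k\leq e^{\eta n}$.} The substantive difficulty is the lower bound $|W|\geq c_\star e^{-C\beta(\alpha)n}$: one cannot simply isolate a single dominant term since several $\sin(\pi(\tilde n+2k)\alpha)$ may be of comparable size with unpredictable signs. The strategy is to pair the resonant indices $k\leftrightarrow -\tilde n-k$, exploit the symmetry $|\hat u_{-k}|=|\hat u_k|$ coming from the real-valuedness of the potential and the normalization of the dual eigenstate, and reduce $W$ to a form in which one extremal term dominates with a small divisor of the type $\|q\alpha\|_{\mathbb{R}/\mathbb{Z}}$ with $|q|\lesssim n$; by the very definition \eqref{sde} of $\beta(\alpha)$ this small divisor is at least $e^{-(\beta(\alpha)+o(1))n}$ for large $n$. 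Combining the bounds gives $\|A^E_k\|_\eta\leq C_\star e^{C\beta(\alpha)n}$, the $e^{C\eta n}$ contribution from $\|B\|_\eta^2$ being absorbed once one trades a small amount of strip width, which is permissible because Lemma \ref{Blol2} is available on the wider strip $\Delta_{C_2\eta}$. Finally, the restriction $0\leq k\leq e^{\eta n}$ reflects that one must in practice work with a truncated Bloch wave at Fourier cutoff of order $n$ (since the decay in \eqref{ale} is exactly the critical $2\pi\eta$); the per-step error produced by truncation is of order $e^{-2\pi\eta n}$, so its $k$-fold accumulation remains negligible precisely in the stated range.
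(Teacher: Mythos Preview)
The paper does not prove this lemma; it is imported verbatim from \cite{LYJMP}. So the comparison is between your sketch and the actual argument in Liu--Yuan.

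Your diagonalization $A^E_k=B(\cdot+k\alpha)D_kB(\cdot)^{-1}$ is fine structurally, but it yields the \emph{wrong exponent}. On the strip $\Delta_\eta$ the low Fourier modes $|k|<3n$ of $u$ satisfy only $|\hat u_k|\leq 1$, so each contributes $e^{2\pi\eta|k|}$ and your own estimate gives $\|B\|_\eta\leq C_\star e^{C\eta n}$. Even granting the lower bound $|W|\geq c_\star e^{-C\beta(\alpha)n}$, you end up with
\[
\|A^E_k\|_\eta\;\leq\;C\,\frac{\|B\|_\eta^2}{|W|}\;\leq\;C_\star e^{C\eta n},
\]
not $C_\star e^{C\beta(\alpha)n}$. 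The sentence ``the $e^{C\eta n}$ contribution from $\|B\|_\eta^2$ being absorbed once one trades a small amount of strip width'' does not salvage this: the lemma asks for the $\|\cdot\|_\eta$ norm, and on that strip the low modes unavoidably cost $e^{C\eta n}$. (Compare Lemma~\ref{Lemay30}, where the paper bounds $\widehat{\mathcal U}$ only on the much narrower strip $\Delta_{40\beta(\alpha)}$ precisely to get $e^{C\beta(\alpha)n}$.) This is not a cosmetic mismatch: in the proof of Theorem~\ref{p1} the lemma is invoked at step \eqref{liu4} with $l_0=\lfloor e^{\frac{3}{4}\eta n}\rfloor$, and replacing $e^{C\beta(\alpha)n}$ by $e^{C\eta n}$ there would destroy the bound \eqref{p1e1} on $\mu$.

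There are also two unjustified steps in your Wronskian analysis: the symmetry $|\hat u_{-k}|=|\hat u_k|$ is asserted but does not follow from real-valuedness of $f$ or from the normalization $\hat u_0=1$; and the reduction of $W$ to a single dominant small divisor is only sketched. But these are secondary to the exponent gap above.

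The argument in \cite{LYJMP} (building on \cite{AJa1}) does not simply diagonalize and read off $\|B\|^2/|W|$. It uses truncated Bloch waves at a sequence of scales together with the resonance structure of $\theta(E)$ to produce an \emph{almost} conjugacy whose norm on $\Delta_\eta$ is controlled by $e^{C\beta(\alpha)n}$ rather than $e^{C\eta n}$; the point is that the growth of $\|A^E_k\|_\eta$ is governed by how close $2\theta$ comes to $\mathbb Z+\alpha\mathbb Z$, which is measured by $\beta(\alpha)$, not by the analyticity width $\eta$. Your sketch conflates these two scales.
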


We define
\begin{equation}\label{Bloe3}
\mathcal{U}(x)=\left(\begin{array}{cc}e^{2\pi i\theta}u(x)\\
u(x-\alpha)\end{array}\right),
\end{equation}
where $u(x)=\sum_{k\in\mathbb{Z}}\widehat{u}_ke^{2\pi ikx}$ and $\theta=\theta(E), \widehat{u}=\{\widehat{u}_k\}$ are given by  Lemmas \ref{Blol1} and \ref{Blol2}.

Let
\begin{equation}
\label{new1}\widehat{\mathcal{U}}(x)=e^{i\pi \widetilde{n}x}\mathcal{U}(x).
\end{equation}
\begin{lem}\label{Lemay30}
Let $\widehat{\mathcal{U}}(x)$ be given by (\ref{new1}). Then    $\widehat{\mathcal{U}}(x)$ is  well defined on  $\mathbb{R}/2\mathbb{Z}$ and
analytic  on $ \Delta_{40\beta(\alpha)}$,
and
\begin{equation}\label{Blo2}
||\widehat{\mathcal{U}}||_{40\beta(\alpha)}\leq C_{\star}e^{ C\beta(\alpha) n}.
\end{equation}
\end{lem}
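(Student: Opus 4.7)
The plan is to start from the quantitative control on $\widehat u=\{\widehat u_k\}$ provided by Lemma~\ref{Blol2}, namely $|\widehat u_k|\le 1$ for all $k$ together with the exponential tail $|\widehat u_k|\le C_\star e^{-2\pi\eta|k|}$ for $|k|\ge 3n$, and bound the analytic extension of $u$ on $\Delta_{40\beta(\alpha)}$ by splitting into low and high frequencies.

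First I would check that $\widehat{\mathcal U}$ is $2$-periodic. Since $u$ is $1$-periodic (as a Fourier series in $e^{2\pi i k x}$), and since $e^{i\pi\widetilde n(x+2)}=e^{2\pi i \widetilde n}e^{i\pi\widetilde n x}=e^{i\pi\widetilde n x}$ regardless of the parity of $\widetilde n$, both coordinates of $\widehat{\mathcal U}$ descend to $\mathbb R/2\mathbb Z$. (When $\widetilde n$ is even one in fact gets a function on $\mathbb R/\mathbb Z$, but the lemma is stated for the common denominator.)

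Second, I would split $u=u_{\rm low}+u_{\rm high}$, with $u_{\rm low}(z)=\sum_{|k|<3n}\widehat u_k e^{2\pi ikz}$ and $u_{\rm high}(z)=\sum_{|k|\ge 3n}\widehat u_k e^{2\pi ikz}$. On $\Delta_{40\beta(\alpha)}$, since $|\widehat u_k|\le 1$, one has the trivial bound
\begin{equation*}
|u_{\rm low}(z)|\le \sum_{|k|<3n} e^{2\pi|k|\cdot 40\beta(\alpha)}\le 6n\, e^{240\pi n\beta(\alpha)}\le C_\star e^{C\beta(\alpha)n}.
\end{equation*}
For the tail, using the smallness hypothesis $\eta>C_1\beta(\alpha)$ (with $C_1$ absolute and, say, $C_1\ge 80$),
\begin{equation*}
|u_{\rm high}(z)|\le C_\star\sum_{|k|\ge 3n} e^{-2\pi\eta|k|}e^{80\pi\beta(\alpha)|k|}\le C_\star\sum_{k\in\mathbb Z} e^{-2\pi(\eta-40\beta(\alpha))|k|}\le C_\star,
\end{equation*}
which is absorbed into the previous bound. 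Thus $\|u\|_{40\beta(\alpha)}\le C_\star e^{C\beta(\alpha) n}$, and by translation invariance the same bound holds for $u(\cdot-\alpha)$.

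Finally, the prefactor $e^{i\pi\widetilde n z}$ satisfies $|e^{i\pi\widetilde n z}|\le e^{\pi|\widetilde n|\cdot 40\beta(\alpha)}=e^{40\pi n\beta(\alpha)}$ on $\Delta_{40\beta(\alpha)}$, and the scalar $|e^{2\pi i\theta}|=1$ contributes nothing. Multiplying through yields
\begin{equation*}
\|\widehat{\mathcal U}\|_{40\beta(\alpha)}\le e^{40\pi n\beta(\alpha)}\cdot C_\star e^{C\beta(\alpha)n}\le C_\star e^{C\beta(\alpha)n},
\end{equation*}
which is \eqref{Blo2}. Analyticity on $\Delta_{40\beta(\alpha)}$ follows from the uniform convergence of the Fourier series on this strip, guaranteed by the same split. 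There is no real obstacle; the only delicate point is keeping track of the fact that the low modes $|k|<3n$ of $u$ contribute the factor $e^{C\beta(\alpha)n}$, and checking that the assumption $\eta>C_1\beta(\alpha)$ with $C_1$ large absolute leaves enough margin, $\eta-40\beta(\alpha)>0$, for the tail $u_{\rm high}$ to converge on the strip $\Delta_{40\beta(\alpha)}$.
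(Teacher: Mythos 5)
Your proof is correct and follows exactly the route the paper leaves implicit: the paper's proof is the one-liner ``This follows from \eqref{ale} and the fact that $|\widehat{u}_k|\le 1$ directly,'' and you have simply spelled out the details of the low-mode/high-mode split, together with the observation that $\eta > C_1\beta(\alpha)$ with $C_1$ large leaves enough margin for the tail to converge on $\Delta_{40\beta(\alpha)}$ and for $e^{i\pi\widetilde n z}$ to contribute only an extra $e^{C\beta(\alpha)n}$.
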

\begin{proof}
This follows from
 \eqref{ale} and  the fact that  $|\widehat{u}_k|\leq1 $ directly.
\end{proof}

\begin{rem}
Actually, $\widehat{\mathcal{U}}(x)$  is analytic
 on $ \Delta_{\eta}$. However $40\beta(\alpha)$ is enough  for our goal.
\end{rem}

By \eqref{dualre}, we have
\begin{equation}\label{se1}
A^{E}(x)\widehat{\mathcal{U}}(x)=\pm\widehat{\mathcal{U}}(x+\alpha).\\
\end{equation}
For the $z\in 40\beta(\alpha)$, define
\begin{equation*}
  \Re{\widehat{\mathcal{U}}(z)}=\frac{\widehat{\mathcal{U}}(z)+\overline{\widehat{\mathcal{U}}(\overline{z})}}{2}; \Im{\widehat{\mathcal{U}}(z)}=\frac{\widehat{\mathcal{U}}(z)-\overline{\widehat{\mathcal{U}}(\overline{z})}}{2i}.
\end{equation*}
Then for  $x\in\mathbb{R}/\mathbb{Z}$, we have
$$\widehat{\mathcal{U}}(x)=\Re{\widehat{\mathcal{U}}(x)}+i\Im{\widehat{\mathcal{U}}(x)}\in \mathbb{R}^2+i\mathbb{R}^2,$$
and it follows from (\ref{se1}) that for $x\in\mathbb{R}/\mathbb{Z}$
\begin{eqnarray}
\label{se2}&&A^E(x)\Re{\widehat{\mathcal{U}}(x)}=\pm\Re{\widehat{\mathcal{U}}(x+\alpha)};\\
\label{se3}&&A^E(x)\Im{\widehat{\mathcal{U}}(x)}=\pm\Im{\widehat{\mathcal{U}}(x+\alpha)}.
\end{eqnarray}
Note that $\Re{\widehat{\mathcal{U}}(x)} $ and $\Im{\widehat{\mathcal{U}}(x)}$ are  well defined on $\mathbb{R}/2\mathbb{Z}$ and  analytic in the strip $\Delta_{40 \beta(\alpha)}$.

\begin{lem}\label{use}
We can select $ \mathcal{V}=\Re{\widehat{\mathcal{U}}}$  or  $\mathcal{V}=\Im{\widehat{\mathcal{U}}}$  such that
 $\mathcal{V}$ is real analytic on $\Delta_{40 \beta(\alpha)}$ and
  \begin{equation}\label{de}
\inf_{|\Im{x}|\leq 40\beta(\alpha)}||\mathcal{V}(x)||\geq c_{\star}e^{-C\beta(\alpha) n}.
 \end{equation}
 \end{lem}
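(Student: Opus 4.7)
The plan is to exploit the cocycle invariance together with the structure of $\widehat{\mathcal{U}}$: I select $\mathcal{V}\in\{\Re\widehat{\mathcal{U}},\Im\widehat{\mathcal{U}}\}$ with substantial $L^2$-mass, prove a uniform lower bound on the fiberwise $L^2$-energy via log-convexity, and transfer this to a pointwise lower bound using the cocycle equation and a quantitative Birkhoff estimate.

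For the selection step, since $\widehat{u}_0=1$ I have $\|u\|_{L^2(\mathbb{R}/\mathbb{Z})}^2\geq 1$, hence $\int\|\widehat{\mathcal{U}}(x)\|^2\,dx = 2\|u\|_{L^2}^2 \geq 2$. Splitting $\|\widehat{\mathcal{U}}(x)\|^2 = \|\Re\widehat{\mathcal{U}}(x)\|^2 + \|\Im\widehat{\mathcal{U}}(x)\|^2$ on the real axis, at least one of $\Re\widehat{\mathcal{U}}, \Im\widehat{\mathcal{U}}$ has squared $L^2$-norm $\geq 1$; I take this as $\mathcal{V}$. Then $\mathcal{V}$ is real-analytic on $\Delta_{40\beta(\alpha)}$, and \eqref{se2}--\eqref{se3} extended by analyticity give the cocycle equation $A^E(z)\mathcal{V}(z) = \pm\mathcal{V}(z+\alpha)$ throughout the strip.

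For the quantitative lower bound, set $\phi(y) := \int\|\mathcal{V}(x+iy)\|^2\,dx$. Expanding in Fourier series and using that $\mathcal{V}$ is real-valued on $\mathbb{R}$ (so $\|\widehat{\mathcal{V}}_{-\ell}\|=\|\widehat{\mathcal{V}}_\ell\|$) yields
\[
\phi(y) = \|\widehat{\mathcal{V}}_0\|^2 + 2\sum_{\ell>0}\|\widehat{\mathcal{V}}_\ell\|^2\cosh(4\pi \ell y),
\]
which is $y$-even and log-convex, hence $\phi(y)\geq \phi(0)\geq 1$ throughout $|y|\leq 40\beta(\alpha)$. For any $z = x_0+iy$ in the strip, the cocycle equation combined with Lemma \ref{te} gives
\[
\|\mathcal{V}(z)\|^2 \;\geq\; \|A^E_k(z)\|^{-2}\|\mathcal{V}(z+k\alpha)\|^2 \;\geq\; c_\star e^{-2C\beta(\alpha) n}\|\mathcal{V}(z+k\alpha)\|^2
\]
for every $0\leq k\leq K:=\lfloor e^{\eta n}\rfloor$. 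Summing over $k$ and using a Weyl-type estimate for $f_y(x):=\|\mathcal{V}(x+iy)\|^2$---analytic in $x$ on a strip of width $\gtrsim\beta(\alpha)$ via the decomposition $f_y(x) = \mathcal{V}(x+iy)\cdot\overline{\mathcal{V}}(x-iy)$, so its Fourier coefficients decay at rate $e^{-2\pi|m|(\eta-|y|)}$---shows $\frac{1}{K}\sum_{k=0}^{K-1}f_y(x_0+k\alpha)\geq 1/2$ for $n$ sufficiently large. Substituting back yields $\|\mathcal{V}(z)\|^2\geq c_\star e^{-2C\beta(\alpha) n}$.

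The main technical obstacle is the quantitative Birkhoff step: for Liouville $\alpha$ the small divisor $\|m\alpha\|_{\mathbb{R}/\mathbb{Z}}^{-1}$ in the Weyl sum can grow at rate $e^{\beta(\alpha)|m|}$, and must be absorbed by the exponential Fourier decay of $f_y$. Balancing these is precisely what forces the standing hypothesis $\eta>C_1\beta(\alpha)$ with $C_1$ a large absolute constant, so that $K=e^{\eta n}$ simultaneously stays within the range controlled by Lemma \ref{te} and is long enough to bring the Birkhoff average close to $\phi(y)$.
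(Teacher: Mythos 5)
Your proposal takes a genuinely different route from the paper. The paper isolates a contour-integral invariant --- using $\widehat u_0=1$ it shows that one of $\int_{\mathbb R/2\mathbb Z}e^{-\widetilde n\pi ix}\Re\widehat{\mathcal U}\,dx$ or $\int_{\mathbb R/2\mathbb Z}e^{-\widetilde n\pi ix}\Im\widehat{\mathcal U}\,dx$ has norm at least $\sqrt 2$, a quantity that is automatically unchanged upon shifting the integration contour to any height $|t|\le 40\beta(\alpha)$ by analyticity --- and then argues by contradiction: if $\mathcal V$ were small at a single point $x_0+it$, then by the cocycle equation, Lemma \ref{te}, and the quantitative density of the orbit (delegated to Theorem 4.5 of \cite{LYJFG}) one gets $\sup_x\|\mathcal V(x+it)\|\le C_\star e^{-C\beta(\alpha)n}$, which kills the contour integral and yields a contradiction. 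You instead pick $\mathcal V$ by $L^2$-mass, bound $\phi(y)=\int\|\mathcal V(x+iy)\|^2dx$ below at every height by the elementary $\cosh(4\pi\ell y)\ge 1$ observation (your ``log-convexity'' is overkill but fine), and then derive the pointwise bound directly, rather than by contradiction, via a Birkhoff average of $f_y(x)=\|\mathcal V(x+iy)\|^2$ along the orbit, combined with the cocycle inequality $\|\mathcal V(z)\|\ge\|A^E_k(z)\|^{-1}\|\mathcal V(z+k\alpha)\|$ and Lemma \ref{te}. Both routes run on the same engine (cocycle bound plus quantitative equidistribution on a time scale $\sim e^{\eta n}$), but yours replaces the sup-norm contradiction with a direct $L^2$-averaging argument, which is conceptually cleaner, at the cost of having to actually carry out the Weyl-sum estimate rather than citing it.

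Two remarks on that Weyl-sum step, which you correctly flag as the crux. First, your stated decay $|\widehat{f_y}(m)|\lesssim e^{-2\pi|m|(\eta-|y|)}$ is not right: since the Fourier mass of $\mathcal V$ sits near modes $\pm\widetilde n/2$, the coefficients $\widehat{f_y}(m)$ are $O(e^{C\beta(\alpha)n})$ (not small) for all $|m|\lesssim n$, and only start decaying at rate $\sim\eta$ beyond that; also the prefactor you would get from $\|\mathcal V\|_\delta$ for $\delta$ near $\eta$ is $e^{C\eta n}$, not $e^{C\beta n}$. The estimate therefore has to be split into a low-frequency regime $0<|m|\lesssim n$, where the saving must come entirely from $\tfrac{1}{K\|m\alpha\|_{\mathbb R/\mathbb Z}}\le C(\alpha)e^{2\beta(\alpha)n-\eta n}$, and a high-frequency regime $|m|\gg n$, where one finally uses the exponential decay. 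Second, your factorization should read $f_y(x)=\mathcal V(x+iy)\cdot\mathcal V(x-iy)$ (no extra bar), by real-analyticity $\overline{\mathcal V(w)}=\mathcal V(\overline w)$. With these corrections the parameter balance does close, precisely because $\eta>C_1\beta(\alpha)$ makes $K=e^{\eta n}$ overwhelm the small divisors, so your approach is viable, though less complete as written than the paper's (which leans on a cited technical lemma for the corresponding step).
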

\begin{proof}
Since $\widehat{u}_0=1$, we have
$$||\int_{\mathbb{R}/2\mathbb{Z}}\left(e^{-\widetilde{n}\pi ix}\Re{\widehat{\mathcal{U}}}(x)+ie^{-\widetilde{n}\pi ix}\Im{\widehat{\mathcal{U}}}(x)\right)\mathrm{d}x||=2\sqrt{2}.$$
Thus we can choose  $ \mathcal{V}=\Re{\widehat{\mathcal{U}}}$  or  $\mathcal{V}=\Im{\widehat{\mathcal{U}}}$  such that
\begin{equation}\label{lower}
||\int_{\mathbb{R}/2\mathbb{Z}}e^{-\widetilde{n}\pi ix}\mathcal{V}(x)\mathrm{d}x||\geq\sqrt{2}.
\end{equation}
Suppose  (\ref{de})  is not true. Then there must be some $x_0\in\Delta_{40\beta(\alpha)}$ with $\Im{x_0}=t$ such that
\begin{equation}
||\mathcal{V}(x_0)||\leq  c_{\star}e^{-C\beta(\alpha)n}.
\end{equation}
By Lemma \ref{te}, \eqref{se2} and \eqref{se3}, and
following the arguments of   the  proof of Theorem 4.5 in \cite{LYJFG}, one has
\begin{equation*}
\sup_{x\in\mathbb{R}}||\mathcal{V}(x+it)|| \leq C_{\star}e^{-C\beta(\alpha) n}.
\end{equation*}
Thus, we obtain
\begin{equation*}
||\int_{\mathbb{R}/2\mathbb{Z}}e^{-\widetilde{n}\pi i(x+it)}\mathcal{V}(x+it)\mathrm{d}x||\leq C_{\star}e^{-C\beta(\alpha) n},
\end{equation*}
which   contradicts    (\ref{lower}).

\end{proof}

\textbf{Proof of Theorem \ref{p1}}

\begin{proof}


Let
\begin{equation}\label{se6}
R^{(1)}(x)=\left[\begin{array}{cc}\mathcal{V}(x)&T\frac{\mathcal{V}(x)}{||\mathcal{V}(x)||^2}\end{array}\right],
\end{equation}
where $T\left(
          \begin{array}{c}
            x \\
            y \\
          \end{array}
        \right)
=\left(
          \begin{array}{c}
            -y\\
            x \\
          \end{array}
        \right)$ and $\mathcal{V}$ is given by Lemma \ref{use}.
By Lemma \ref{Lemay30},
it is easy to see that $R^{(1)}\in C_{40\beta}^{\omega}(\mathbb{R}/\mathbb{Z},{\rm PSL}(2,\mathbb{R}))$. From (\ref{Blo2}), (\ref{de}) and (\ref{se6}), we have \begin{equation}\label{se7}
||(R^{(1)})^{-1}||_{40 \beta(\alpha)},||R^{(1)}||_{40 \beta(\alpha)}\leq C_{\star}e^{C\beta(\alpha)n}.
\end{equation}

By (\ref{se2}), (\ref{se3}), (\ref{se6}) and (\ref{se7}), one has
\begin{equation}\label{se8}(R^{(1)})^{-1}(x+\alpha)A^{E}(x)R^{(1)}(x) =\left[\begin{array}{cc}\pm1&\nu(x)\\
0&\pm1\end{array}\right],
\end{equation}
where 
 \begin{equation}\label{se9}
||\nu||_{40\beta(\alpha)}\leq C_{\star}e^{C\beta(\alpha)n}.
\end{equation}

 Now we will reduce the right side hand of (\ref{se8}) to a constant cocycle  by solving a homological equation. More concretely, let $\phi(x)$ be a function defined on $\mathbb{R}/\mathbb{Z}$ with
$[\phi]=0$ and
\begin{equation*}
\left[\begin{array}{cc}1&\phi(x+\alpha)\\
0&1\end{array}\right]^{-1}\left[\begin{array}{cc}\pm1&\nu(x)\\
0&\pm1\end{array}\right]\left[\begin{array}{cc}1&\phi(x)\\
0&1\end{array}\right]=\left[\begin{array}{cc}\pm1&[\nu]\\
0&\pm1\end{array}\right].
\end{equation*}
This can be done if we let
\begin{equation}\label{se11}
\pm\phi(x+\alpha)\mp\phi(x)=\nu(x)-[\nu].
\end{equation}
By considering the Fourier series of  (\ref{se11}), one has
\begin{equation}\label{se12}
\widehat{\phi}_k=\pm \frac{\widehat{\nu}_k}{e^{2\pi ik\alpha}-1}\ (k\neq0),
\end{equation}
where $\widehat{\phi}_k$ and $\widehat{\nu}_k$ are Fourier coefficients of $\phi(x),\nu(x)$ respectively.

 By the definition of $\beta(\alpha)$, we have the following small divisor condition
\begin{equation}\label{small}
 ||k\alpha||_{\mathbb{R}/\mathbb{Z}}\geq C(\alpha)e^{-2\beta(\alpha)|k|}, k\neq0.
\end{equation}
Combining with \eqref{se12} and \eqref{se9}, one has
\begin{equation}\label{se13}
||\phi||_{20\beta(\alpha)}\leq C_{\star}e^{C\beta(\alpha)n}.
\end{equation}
 Let
 \begin{equation}\label{se14}
R(x)=R^{(1)}(x)\left[\begin{array}{cc}1&\phi(x)\\
0&1\end{array}\right].
\end{equation}
By (\ref{se7})  and \eqref{se13},  one has
\begin{equation}\label{liu2}
    ||R||_{20\beta(\alpha)},||R^{-1}||_{20\beta(\alpha)}\leq  C_{\star}e^{C\beta(\alpha)n}.
\end{equation}
This implies (\ref{p1e2}).
Now we are in the position to
give a   estimate on $\mu$. From (\ref{se8}) and (\ref{se14}), we obtain
\begin{equation*}\label{se16}
R^{-1}(x+\alpha)A^{E}(x)R(x) =\left[\begin{array}{cc}\pm1&\mu\\
0&\pm1\end{array}\right],
\end{equation*}
and thus for any $l\in \mathbb{N}$
\begin{equation}\label{se17}
 R^{-1}(x+l\alpha)A_l^{E}(x)R(x)=\left[\begin{array}{cc}\pm1&l\mu\\
0&\pm1\end{array}\right].
\end{equation}
Let $l=l_0=\lfloor  e^{\frac{3}{4}\eta n} \rfloor$ in \eqref{se17}, one has
\begin{eqnarray}
  \nonumber l_0 |\mu| &\leq & ||R^{-1}||_{20\beta(\alpha)}||A^{E}_{l_0}||_{20\beta(\alpha)}||R||_{20\beta(\alpha)} \\
   &\leq&   C_{\star} e^{C\beta(\alpha)n},\label{liu4}
\end{eqnarray}
where the second inequality holds by \eqref{liu3}  and  \eqref{liu2}.

\eqref{p1e1} follows from \eqref{liu4} directly.

\end{proof}
We will give more  details about
\begin{equation}\label{R}
R(x)=\left[\begin{array}{cc}R_{11}(x)&R_{12}(x)\\
R_{21}(x)&R_{22}(x)\end{array}\right],
\end{equation}
which is defined in Theorem \ref{p1}. 

\begin{thm}\label{p2}
Let $ [R_{ij}(x)]_{i,j\in\{1,2\}}$ be  in Theorem \ref{p1}. Then we have
\begin{enumerate}
\item  [(i)]
\begin{eqnarray}
\nonumber&&R_{21}(x+\alpha)=R_{11}(x),\\
\nonumber&&R_{22}(x+\alpha)=R_{12}(x)-\mu R_{11}(x),\\
\label{Re3}&&R_{11}(x+\alpha)R_{12}(x)
-R_{12}(x+\alpha)R_{11}(x)=1+\mu R_{11}(x+\alpha)R_{11}(x);
\end{eqnarray}
 \item[(ii)]
 \begin{equation}\label{Re4}
 [R_{11}^2]=[R_{21}^2]\geq\frac{1}{2||R||_0};
\end{equation}
\item[(iii)]
 \begin{eqnarray}
\label{Re6}&&\frac{[R_{11}^2]}{[R_{11}^2][R_{12}^2]-[R_{11}R_{12}]^2}\leq C_{\star}e^{C\beta(\alpha)n},\\
\label{Re7}&&[R_{11}^2][R_{12}^2]-[R_{11}R_{12}]^2\geq c_{\star}e^{-C\beta(\alpha)n}.
\end{eqnarray}
 \end{enumerate}
\end{thm}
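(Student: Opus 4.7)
My plan handles the three parts of Theorem \ref{p2} in order.

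For (i), I rewrite the conjugacy relation \eqref{Red} as $A^{E}(x)R(x)=R(x+\alpha)B$ with $B=\bigl[\begin{smallmatrix}\pm 1 & \mu_m\\ 0 & \pm 1\end{smallmatrix}\bigr]$. Since the second row of $A^E(x)=S_{\lambda f,E}(x)$ equals $(1,0)$, the second row of $A^E(x)R(x)$ is simply $(R_{11}(x),R_{12}(x))$. Equating this with the second row of $R(x+\alpha)B$ immediately yields the first two identities of (i); the third then follows by substituting these into $\det R(x+\alpha)=1$ and simplifying.

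For (ii), the equality $[R_{11}^2]=[R_{21}^2]$ is immediate from the first identity of (i) together with the translation invariance of the integral over $\mathbb{R}/\mathbb{Z}$. For the lower bound, I invoke the $\mathrm{SL}(2)$ structure of $R(x)$: the Lagrange identity applied to the columns of $R(x)$ gives $R_{11}^2(x)+R_{21}^2(x)\ge 1/\|R(x)\|^2$ pointwise, and integrating while using $[R_{11}^2]=[R_{21}^2]$ produces the stated bound.

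The main challenge is (iii), where the key insight is to identify $\det M:=[R_{11}^2][R_{12}^2]-[R_{11}R_{12}]^2$ as the Gram determinant of $R_{11}$ and $R_{12}$ in $L^2(\mathbb{R}/\mathbb{Z})$. I would integrate the third identity of (i) over $\mathbb{R}/\mathbb{Z}$ to obtain
\begin{equation*}
[R_{11}(\cdot+\alpha)R_{12}]-[R_{12}(\cdot+\alpha)R_{11}]=1+\mu_m\,[R_{11}(\cdot+\alpha)R_{11}].
\end{equation*}
I then orthogonally decompose $R_{12}=cR_{11}+h$ in $L^2$, where $c=[R_{11}R_{12}]/[R_{11}^2]$ and $[R_{11}h]=0$. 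The parallel component $cR_{11}$ contributes zero to the left-hand side by antisymmetry, so only the $h$-contribution survives, which Cauchy-Schwarz bounds by $2\sqrt{[R_{11}^2]\cdot[h^2]}=2\sqrt{\det M}$, using the identity $[R_{11}^2][h^2]=\det M$. For the right-hand side, \eqref{p1e1} together with the bound $[R_{11}^2]\le\|R\|_0^2\le C_\star e^{C\beta(\alpha)n}$ from \eqref{p1e2}, combined with $\eta>C_1\beta(\alpha)$, forces $|\mu_m|\cdot[R_{11}^2]\le 1/2$ for $|m|\ge m_\star$, so the right-hand side is at least $1/2$. Therefore $\det M\ge 1/16$, which is far stronger than \eqref{Re7}, and then \eqref{Re6} follows from combining this with the upper bound on $[R_{11}^2]$.

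The principal obstacle is the orthogonalization step in (iii): a naive Cauchy-Schwarz applied to the unmodified identity would only yield $2\sqrt{[R_{11}^2][R_{12}^2]}$, which controls $\sqrt{[R_{11}^2][R_{12}^2]}$ from below but cannot be related to the Gram determinant $\det M$, and is therefore too weak to conclude. Splitting off the $cR_{11}$ component is exactly what converts the estimate into one on $\det M$.
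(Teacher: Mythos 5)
Your approach is correct and in substance matches the paper's (which in turn follows \cite{Leguil1}). For (i), the paper also invokes ``direct computation''; your route --- reading off the second row of $A^E(x)R(x)=R(x+\alpha)B$ using that the second row of $S_{\lambda f,E}$ is $(1,0)$, then using $\det R(x+\alpha)=1$ --- is precisely that computation (modulo the usual $\mathrm{PSL}(2)$ sign normalization, which is handled by the choice of representative). For (iii), you and the paper both form $\hat R=R_{12}-\frac{[R_{11}R_{12}]}{[R_{11}^2]}R_{11}$, observe that the parallel component cancels in the antisymmetric expression $R_{11}(x+\alpha)\hat R(x)-R_{11}(x)\hat R(x+\alpha)$, and apply Cauchy--Schwarz. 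Where you differ: the paper bounds $\left[|R_{11}(\cdot+\alpha)\hat R(\cdot)-R_{11}(\cdot)\hat R(\cdot+\alpha)|\right]\le 2\|R\|_0\sqrt{[\hat R^2]}$, i.e.\ uses the cruder replacement $\sqrt{[R_{11}^2]}\le\|R\|_0$, and so only obtains $[\hat R^2]\ge\frac{1}{16\|R\|_0^2}$ and an $n$-dependent lower bound on $\det M$; you keep the tight constant $\sqrt{[R_{11}^2]}$, getting $2\sqrt{[R_{11}^2][\hat R^2]}=2\sqrt{\det M}\ge 1/2$ and hence the clean absolute bound $\det M\ge 1/16$. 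This is a genuine sharpening of \eqref{Re7}, and \eqref{Re6} then follows exactly as you say.

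One minor discrepancy is in (ii). Your argument --- $R_{11}^2(x)+R_{21}^2(x)\ge 1/\|R(x)\|^2$ pointwise from the singular values of $R(x)\in\mathrm{SL}(2,\mathbb R)$, then integrate and halve --- yields $[R_{11}^2]\ge\frac{1}{2\|R\|_0^2}$, not the stated $\frac{1}{2\|R\|_0}$. Since $\|R\|_0\ge 1$, the bound in \eqref{Re4} is the \emph{stronger} one, so strictly speaking your derivation does not reproduce it. The $\|R\|_0^{-1}$ in \eqref{Re4} appears to be a typo (the singular-value argument naturally scales as $\|R\|_0^{-2}$, and none of the Cauchy--Schwarz variants I can see improve the exponent), and either form is absorbed into $c_\star e^{-C\beta(\alpha)n}$ in every downstream use via \eqref{p1e2}, so the final conclusions are unaffected; but you should not claim your argument ``produces the stated bound.''
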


\begin{proof}

(i).  This is done by direct computations and the details can be found in the proof of  Lemma 6.3 in \cite{Leguil1}.

(ii). See  the proof of Lemma 6.2  in \cite{Leguil1}.


(iii). The proof is similar to  that in \cite{Leguil1}. Note
\begin{equation*}
\frac{[R_{11}^2][R_{12}^2]-[R_{11}R_{12}]^2}{[R_{11}^2]}=\left[\left(R_{12}-\frac{[R_{11}R_{12}]}{[R_{11}^2]}R_{11}\right)^2\right],
\end{equation*}
and define
\begin{equation}\label{Re9}
\hat{R}(x)=R_{12}(x)-\frac{[R_{11}R_{12}]}{[R_{11}^2]}R_{11}(x).
\end{equation}
By (\ref{Re3}) and (\ref{Re9}), we have
\begin{equation}\label{Re10}
R_{11}(x+\alpha)\hat{R}(x)-R_{11}(x)\hat{R}(x+\alpha)=1+\mu R_{11}(x+\alpha)R_{11}(x).
\end{equation}
By  Cauchy-Schwarz inequality, one has
\begin{equation}\label{Re11}
\left[|R_{11}(\cdot+\alpha)\hat{R}(\cdot)-R_{11}(\cdot)\hat{R}(\cdot+\alpha)|\right]\leq 2||R||_0\sqrt{[\hat{R}^2]}.
\end{equation}
By (\ref{p1e1}) and (\ref{p1e2}) in Theorem \ref{p1}, we get for
$n\geq n_{\star}$\footnote{$ n_{\star}$ is a large constant depending on $\lambda,f$ and $\alpha$.}
\begin{equation}\label{Re12}
\left[|1+\mu R_{11}(\cdot+\alpha)R_{11}(\cdot)|\right]
\geq\frac{1}{2}.
\end{equation}
By (\ref{Re10}), (\ref{Re11}) and (\ref{Re12}), one has
\begin{equation*}
[\hat{R}^2]\geq\frac{1}{16||R||_0^2}\geq e^{-C\beta(\alpha)n},
\end{equation*}
 which implies (\ref{Re6}).
 Now
 (\ref{Re7})
follows from (\ref{p1e2}), (\ref{Re4}) and (\ref{Re6}).
\end{proof}

\section{Perturbation near the boundary of a spectral gap}

In this section, we will perturb the cocycle $(\alpha,A^{E})$ near the boundary $E=E_m^+$ of a spectral gap $G_m$ with $m\in\mathbb{Z}\setminus\{0\}$.
Without loss of generality, we assume
 the reduced cocycle given by  Theorem \ref{p1} is \begin{equation}\label{pn1}
P=\left[\begin{array}{cc}1&\mu_m\\
0&1\end{array}\right].
\end{equation}
\begin{lem}[ \cite{Puig,Puig06,Leguil1}]\label{pnl1}
Let $R(x)$ be as in Theorem \ref{p1} and $P$ in (\ref{pn1}). Then for $\epsilon \in\mathbb{R}, x\in\mathbb{R}/\mathbb{Z}$, we have
\begin{equation}\label{pn2}
R^{-1}(x+\alpha)A^{E+\epsilon}(x)R(x) =P+\epsilon \widetilde{P}(x),
\end{equation}
where
\begin{equation}\label{pn3}
\widetilde{P}(x)=\left[\begin{array}{cc}R_{11}(x)R_{12}(x)-\mu_m R_{11}^2(x)& R_{12}^2(x)-\mu_m R_{11}(x)R_{12}(x)\\
-R_{11}^2(x)&-R_{11}(x)R_{12}(x)\end{array}\right].
\end{equation}
\end{lem}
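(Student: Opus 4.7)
The proof is a direct computation that exploits the explicit structure of the Schr\"odinger cocycle together with the identities in Theorem \ref{p2}(i). The plan is as follows.

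First, I would observe that the energy enters $A^{E}(x)=S_{\lambda f,E}(x)$ only in the $(1,1)$ entry, so that
\begin{equation*}
A^{E+\epsilon}(x)=A^{E}(x)+\epsilon Q,\qquad Q=\left[\begin{array}{cc}1&0\\ 0&0\end{array}\right].
\end{equation*}
Applying the conjugacy $R(x)$ furnished by Theorem \ref{p1} and using \eqref{Red}--\eqref{pn1} to handle the $A^{E}$ part, this immediately reduces the lemma to the identity
\begin{equation*}
\widetilde{P}(x)=R^{-1}(x+\alpha)\,Q\,R(x).
\end{equation*}

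Next, I would compute this product entry-by-entry. Since $R\in\mathrm{PSL}(2,\mathbb{R})$, its inverse is
\begin{equation*}
R^{-1}(y)=\left[\begin{array}{cc}R_{22}(y)&-R_{12}(y)\\ -R_{21}(y)&R_{11}(y)\end{array}\right],
\end{equation*}
and multiplication by $Q$ on the right keeps only the first column, giving
\begin{equation*}
R^{-1}(x+\alpha)Q\,R(x)=\left[\begin{array}{cc}R_{22}(x+\alpha)R_{11}(x)&R_{22}(x+\alpha)R_{12}(x)\\ -R_{21}(x+\alpha)R_{11}(x)&-R_{21}(x+\alpha)R_{12}(x)\end{array}\right].
\end{equation*}

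Finally, I would substitute the shift identities from Theorem \ref{p2}(i), namely
\begin{equation*}
R_{21}(x+\alpha)=R_{11}(x),\qquad R_{22}(x+\alpha)=R_{12}(x)-\mu_m R_{11}(x),
\end{equation*}
into each of the four entries. The lower row becomes $(-R_{11}^2(x),-R_{11}(x)R_{12}(x))$ directly, while the upper row becomes $(R_{11}(x)R_{12}(x)-\mu_m R_{11}^2(x),\,R_{12}^2(x)-\mu_m R_{11}(x)R_{12}(x))$ after expansion. This matches \eqref{pn3} exactly.

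There is no genuine obstacle here: the content of the lemma is bookkeeping, and all nontrivial work has already been done in assembling the relations of Theorem \ref{p2}(i). The only subtlety worth flagging is that we are working in $\mathrm{PSL}(2,\mathbb{R})$, so $R$ is only defined up to a global sign; however, both sides of \eqref{pn2} are invariant under $R\mapsto -R$, so the formula is well-defined on the quotient.
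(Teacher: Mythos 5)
Your computation is correct and follows the standard route that the cited references take: write $A^{E+\epsilon}(x)=A^{E}(x)+\epsilon Q$ with $Q=\left[\begin{smallmatrix}1&0\\0&0\end{smallmatrix}\right]$, use \eqref{Red} to absorb the $A^E$ part into $P$, and evaluate $R^{-1}(x+\alpha)QR(x)$ via the shift identities of Theorem \ref{p2}(i). The paper states Lemma \ref{pnl1} with references only and gives no proof of its own, so there is nothing to compare beyond confirming the argument; your remark on the $R\mapsto -R$ invariance correctly addresses the $\mathrm{PSL}(2,\mathbb{R})$ ambiguity.
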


Next, we will tackle the perturbed cocycle $(\alpha,P+\epsilon\widetilde{P})$ given in (\ref{pn2}) by   averaging method, which was originally from \cite{MP}, and well developed in  \cite{Puig,Puig06,Amor} for Diophantine frequencies and \cite{LYJFG} for Liouville frequencies.
 We   develop  the   averaging method to reduce the cocycle $(\alpha,P+\epsilon\widetilde{P})$ with the Liouville frequency $\alpha$ to a new constant cocycle plus a   smaller perturbation, that is
\begin{thm}\label{p3}
Let  $\delta=5\beta(\alpha)$. Then  the following statements hold
 \begin{enumerate}
 \item [(i)]for any $|\epsilon|\leq\frac{1}{C(\alpha)||R||_{2\delta}^{2}}$, there exist  $R_{1,\epsilon}, \widetilde{P}_{1,\epsilon}\in C_\delta^{\omega}(\mathbb{R}/\mathbb{Z},{\rm SL}(2,\mathbb{R}))$ and $P_{1,\epsilon}\in {\rm SL}(2,\mathbb{R})$ such that
\begin{equation*}
R_{1,\epsilon}^{-1}(x+\alpha)(P+\epsilon\widetilde{P}(x))R_{1,\epsilon}(x) =P_{1,\epsilon}+\epsilon^2\widetilde{P}_{1,\epsilon}(x)
\end{equation*}
and
\begin{eqnarray}
\label{pn5}&&||R_{1,\epsilon}-I||_\delta\leq C(\alpha)||R||_{2\delta}^2|\epsilon|,\\
\label{pn6}&&||P_{1,\epsilon}-P||\leq C(\alpha)||R||_{2\delta}^2|\epsilon|,\\
\label{pn7}&&||\widetilde{P}_{1,\epsilon}||_\delta\leq C(\alpha)||R||_{2\delta}^4,\\
\label{pn10}&&P_{1,\epsilon}=P+\epsilon[\widetilde{P}];
\end{eqnarray}
\item[(ii)]
for any $|\epsilon|\leq\frac{1}{C(\alpha)||R||_{2\delta}^{4}}$, there exist  $R_{2,\epsilon},\widetilde{P}_{2,\epsilon}\in C^{\omega}(\mathbb{R}/\mathbb{Z},{\rm SL}(2,\mathbb{R}))$ and $P_{2,\epsilon}\in {\rm SL}(2,\mathbb{R})$ such that
\begin{equation}\label{pn11}
R_{2,\epsilon}^{-1}(x+\alpha)(P_{1,\epsilon}+\epsilon^2\widetilde{P}_{1,\epsilon}(x))R_{2,\epsilon}(x) =P_{2,\epsilon}+\epsilon^3\widetilde{P}_{2,\epsilon}(x),\\
\end{equation}
and
\begin{eqnarray}
\label{pn12}&&||R_{2,\epsilon}-I||_0\leq  C(\alpha)||R||_{2\delta}^4\epsilon^2,\\
\label{pn13}\nonumber&&||P_{2,\epsilon}-P_{1,\epsilon}||\leq C(\alpha)||R||_{2\delta}^4\epsilon^2,\\
\label{pn15}\nonumber&&||\widetilde{P}_{2,\epsilon}||_0\leq C(\alpha)||R||_{2\delta}^8,\\
\label{pn16}\nonumber&&P_{2,\epsilon}=P_{1,\epsilon}+\epsilon^2[\widetilde{P}_{1,\epsilon}].
\end{eqnarray}
\end{enumerate}
\end{thm}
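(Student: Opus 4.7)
The plan for part (i) is one step of classical first-order averaging. I would seek a conjugacy of the form $R_{1,\epsilon}(x) = I + \epsilon Y(x)$ with $Y\in C^\omega_\delta(\mathbb{R}/\mathbb{Z},\mathcal{M}_2(\mathbb{R}))$ and $[Y]=0$, and set $P_{1,\epsilon}:=P+\epsilon[\widetilde{P}]$ so that (\ref{pn10}) holds by construction. The first-order $\epsilon$-expansion
\[
R_{1,\epsilon}^{-1}(x+\alpha)(P+\epsilon\widetilde{P}(x))R_{1,\epsilon}(x) = P + \epsilon\bigl(\widetilde{P}(x) + PY(x) - Y(x+\alpha)P\bigr) + O(\epsilon^2)
\]
then reduces matters to the cohomological equation
\[
PY(x) - Y(x+\alpha)P = [\widetilde{P}] - \widetilde{P}(x),
\]
whose right-hand side has zero mean.

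The key step is to invert this equation by Fourier series. Writing $P = I + N$ with $N = \bigl(\begin{smallmatrix}0&\mu_m\\0&0\end{smallmatrix}\bigr)$ (so $N^2=0$) and setting $d_k := 1 - e^{2\pi ik\alpha}$, the $k$th Fourier mode reads $d_k\widehat{Y}_k + N\widehat{Y}_k - e^{2\pi ik\alpha}\widehat{Y}_k N = -\widehat{\widetilde{P}}_k$ for $k\ne 0$; this $2\times 2$ linear system is solved entry by entry (in the order $(2,1),(1,1),(2,2),(1,2)$) with inverse norm controlled by $C|d_k|^{-2}(1+|\mu_m|)^2$. The Liouville small divisor bound (\ref{small}) gives $|d_k|^{-1}\le C(\alpha)e^{2\beta(\alpha)|k|}$, and (\ref{pn3}) together with Theorem \ref{p1} yields $\|\widetilde{P}\|_{2\delta}\le C\|R\|_{2\delta}^2$ and the Fourier decay $\|\widehat{\widetilde{P}}_k\|\le \|\widetilde{P}\|_{2\delta}e^{-4\pi\delta|k|}$. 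Combining,
\[
\|Y\|_\delta \le \sum_{k\ne 0}\|\widehat{Y}_k\|e^{2\pi|k|\delta} \le C(\alpha)\|\widetilde{P}\|_{2\delta}\sum_{k\ne 0}e^{(4\beta(\alpha)-2\pi\delta)|k|} \le C(\alpha)\|R\|_{2\delta}^2,
\]
where the geometric series converges because $\delta = 5\beta(\alpha)$ ensures $2\pi\delta - 4\beta(\alpha) > 0$. This yields (\ref{pn5}), and (\ref{pn6}) follows from $P_{1,\epsilon}-P = \epsilon[\widetilde{P}]$. Under $|\epsilon|\le(C(\alpha)\|R\|_{2\delta}^2)^{-1}$, the Neumann series for $R_{1,\epsilon}^{-1}$ converges and the leftover $\epsilon^2\widetilde{P}_{1,\epsilon}$ is an explicit polynomial expression in $Y,P,\widetilde{P}$; term-by-term estimation gives $\|\widetilde{P}_{1,\epsilon}\|_\delta \le C(\alpha)\|R\|_{2\delta}^4$, proving (\ref{pn7}). (Any trace defect in $R_{1,\epsilon}$ is absorbed by the harmless renormalization $R_{1,\epsilon}/\sqrt{\det R_{1,\epsilon}}$, which does not alter the conjugation.)

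For part (ii), I would iterate the same scheme on $(\alpha, P_{1,\epsilon}+\epsilon^2\widetilde{P}_{1,\epsilon})$ with ansatz $R_{2,\epsilon} = I + \epsilon^2 Y_2$, where $Y_2$ solves $P_{1,\epsilon}Y_2(x) - Y_2(x+\alpha)P_{1,\epsilon} = [\widetilde{P}_{1,\epsilon}] - \widetilde{P}_{1,\epsilon}(x)$. Since $P_{1,\epsilon} = P + \epsilon[\widetilde{P}]$ differs from $P$ by at most $C(\alpha)\|R\|_{2\delta}^2|\epsilon|\ll 1$ under $|\epsilon|\le(C(\alpha)\|R\|_{2\delta}^4)^{-1}$, the entry-by-entry inversion of part (i) still applies (the perturbation $\epsilon[\widetilde{P}]$ enters only as a bounded correction to the small-divisor operator $L_k$). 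This gives $\|Y_2\|_0 \le C(\alpha)\|R\|_{2\delta}^4$, hence (\ref{pn12}), (\ref{pn13}), and (\ref{pn16}). The residual $\epsilon^3\widetilde{P}_{2,\epsilon}$ collects the $O(\epsilon^4)$ quadratic-in-$Y_2$ cross terms together with $O(\epsilon^3)$ contributions from the commutator of $\epsilon[\widetilde{P}]$ with $Y_2$ (which arise precisely because $P_{1,\epsilon}\ne P$); both kinds are bounded by $\|R\|_{2\delta}^8$, giving (\ref{pn15}).

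The main obstacle is maintaining Fourier-series convergence against the Liouville small divisors. Theorem \ref{p1} only guarantees $R$ analytic on the narrow strip $\Delta_{20\beta(\alpha)}$, so the total budget for analyticity loss is only a few multiples of $\beta(\alpha)$. The choice $\delta = 5\beta(\alpha)$ is calibrated exactly so that the loss $e^{4\beta(\alpha)|k|}$ from $|d_k|^{-2}$ still leaves a positive rate $2\pi\delta - 4\beta(\alpha) > 0$ after the output strip weight $e^{2\pi\delta|k|}$; this is the one place where the subcritical width condition $h > C^2\beta(\alpha)$ from Theorem \ref{main} is indispensable.
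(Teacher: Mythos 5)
Your part (i) is essentially the paper's argument. You use the ansatz $R_{1,\epsilon}=I+\epsilon Y$ (with an a posteriori $\det$-normalization), whereas the paper uses $R_{1,\epsilon}=e^{\epsilon\mathfrak{Y}}$ with $\mathfrak{Y}\in{\rm sl}(2,\mathbb{R})$ so that $\det R_{1,\epsilon}=1$ automatically; both lead to the same homological equation $\mathfrak{Y}(x+\alpha)P-P\mathfrak{Y}(x)=\widetilde{P}-[\widetilde{P}]$, and your entry-by-entry Fourier inversion (in the order $(2,1),(1,1),(2,2),(1,2)$) is exactly the paper's formulas, with the correct loss of $|d_k|^{-2}$ and the correct bookkeeping $2\pi\delta>4\beta(\alpha)$ for the strip shrink from $2\delta$ to $\delta$.

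Part (ii), however, contains a genuine gap. You propose to solve the second homological equation with the perturbed constant matrix, namely
$P_{1,\epsilon}Y_2(x)-Y_2(x+\alpha)P_{1,\epsilon}=[\widetilde{P}_{1,\epsilon}]-\widetilde{P}_{1,\epsilon}(x)$,
and justify solvability by saying that $\epsilon[\widetilde{P}]$ ``enters only as a bounded correction to the small-divisor operator $L_k$.'' This is the wrong comparison: the unperturbed mode operator $L_k^{(0)}X=e^{2\pi ik\alpha}XP-PX$ has inverse of norm roughly $C(\alpha)|1-e^{2\pi ik\alpha}|^{-2}\leq C(\alpha)e^{4\beta(\alpha)|k|}$, which is unbounded in $k$, so a perturbation of size $|\epsilon|\,\|[\widetilde{P}]\|$ is \emph{not} uniformly small relative to $(L_k^{(0)})^{-1}$, and the Neumann-series / entry-by-entry argument from step (i) does not transfer for large $|k|$. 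There is a real danger of near-resonance: $P_{1,\epsilon}$ is hyperbolic or elliptic with eigenvalues $\lambda,\lambda^{-1}$ at distance $O(|\epsilon|\,\|[\widetilde{P}]\|)$ from $1$, and for a Liouville $\alpha$ one has infinitely many $k$ with $|1-e^{2\pi ik\alpha}|$ comparable to that splitting, making $e^{2\pi ik\alpha}\lambda-\lambda^{-1}$ uncontrollably small. Your own remark that the residual picks up ``$O(\epsilon^3)$ contributions from the commutator of $\epsilon[\widetilde{P}]$ with $Y_2$, which arise precisely because $P_{1,\epsilon}\neq P$'' is inconsistent with solving the equation exactly with $P_{1,\epsilon}$, and actually points to the fix: the paper keeps the \emph{same} constant matrix $P$ in the second homological equation, $\mathfrak{X}(x+\alpha)P-P\mathfrak{X}(x)=\widetilde{P}_{1,\epsilon}(x)-[\widetilde{P}_{1,\epsilon}]$, so the mode-by-mode inversion of step (i) applies verbatim, and the mismatch $P_{1,\epsilon}-P=\epsilon[\widetilde{P}]$ only produces commutator terms of order $\epsilon^3$, which are exactly the ones absorbed into $\epsilon^3\widetilde{P}_{2,\epsilon}$. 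With that change your outline of (ii) becomes sound.
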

The proof of  Theorem \ref{p3} is similar to that in \cite{Puig,Puig06,Amor,LYJFG,Leguil1} with some modifications. We present the proof in the Appendix. We should mention that
it is necessary  to shrink the strip to overcome the small divisor condition (\ref{small}) when we solve the homological equation.
In the proof of cases (i)  and  (ii) of Theorem \ref{p3}, we shrink the strip from $2\delta$ to $\delta$ and $\delta$ to $0$ respectively.



Now we can state our main result of perturbation near the spectral gap.

\begin{thm}\label{pnm}
Let $\delta= 5\beta(\alpha)$ and suppose  $|\epsilon|\leq\frac{1}{C(\alpha)||R||_{2\delta}^4}$. Let $\widehat{R}_{\epsilon}(x)=R_{\epsilon}(x)R_{1,\epsilon}(x)R_{2,\epsilon}(x)\in C^{\omega}(\mathbb{R}/\mathbb{Z},{\rm PSL}(2,\mathbb{R}))$,
where $ R_{1,\epsilon}(x)$  and $R_{2,\epsilon}(x)$ are given by Theorem \ref{p3}.
Then  we have
\begin{equation}\label{pn55}
    \widehat{R}_{\epsilon}^{-1}(x+\alpha)A^{E+\epsilon}(x)\widehat{R}_{\epsilon}(x)=e^{\mathfrak{P}+\epsilon \mathfrak{P}_1+\epsilon^2 \mathfrak{P}_2+\epsilon^3 \mathfrak{R}_\epsilon(x)},
\end{equation}

where
\begin{eqnarray*}
\label{pn56}&&\mathfrak{P}=
\left[
\begin{array}{cc}
  0&\mu_m\\
  0&0
\end{array}
\right],\\
\label{pn57}&&\mathfrak{P}_1=
\left[
\begin{array}{cc}
  -\frac{\mu_m}{2}[R_{11}^2]+[R_{11}R_{12}]&-\mu_m[R_{11}R_{12}]+[R_{12}^2]\\
  -[R_{11}^2]&\frac{\mu_m}{2}[R_{11}^2]-[R_{11}R_{12}]
\end{array}
\right],\\
\label{pn58}&&\mathfrak{P}_2\in{\rm sl}(2,\mathbb{R}),\\
\label{pn60}&&||\mathfrak{P}_2||\leq C(\alpha)||R||_{2\delta}^4,\\
\label{pn63}&&||\mathfrak{R}_\epsilon||_0\leq C(\alpha)||R||_{2\delta}^8.
\end{eqnarray*}
Moreover,
\begin{equation}\label{pn64}
\deg{(\widehat{R}_{\epsilon})}=\deg{(R)}.
\end{equation}

\end{thm}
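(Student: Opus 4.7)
The plan is to chain together the three conjugacies already constructed and then pass to the matrix logarithm. Starting from $(\alpha, A^{E+\epsilon})$, I would first apply Lemma \ref{pnl1} via $R$ to obtain $P + \epsilon \widetilde{P}(x)$; next apply Theorem \ref{p3}(i) via $R_{1,\epsilon}$ to reach $P_{1,\epsilon} + \epsilon^2 \widetilde{P}_{1,\epsilon}(x)$ with $P_{1,\epsilon} = P + \epsilon[\widetilde{P}]$; and finally apply Theorem \ref{p3}(ii) via $R_{2,\epsilon}$ to reach $P_{2,\epsilon} + \epsilon^3 \widetilde{P}_{2,\epsilon}(x)$, where $P_{2,\epsilon} = P + \epsilon[\widetilde{P}] + \epsilon^2[\widetilde{P}_{1,\epsilon}]$. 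Setting $\widehat{R}_{\epsilon} = R \cdot R_{1,\epsilon} \cdot R_{2,\epsilon}$ then gives
\begin{equation*}
\widehat{R}_{\epsilon}^{-1}(x+\alpha)\, A^{E+\epsilon}(x)\, \widehat{R}_{\epsilon}(x) = P_{2,\epsilon} + \epsilon^3 \widetilde{P}_{2,\epsilon}(x).
\end{equation*}

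Next I would pass to the logarithm. Since $\mu_m$ is exponentially small by \eqref{p1e1}, $P$ is close to $I$, and together with the smallness assumption $|\epsilon| \le (C(\alpha)\|R\|_{2\delta}^4)^{-1}$ and the uniform bounds from Theorem \ref{p3}, the matrix $M(x) := P_{2,\epsilon} + \epsilon^3 \widetilde{P}_{2,\epsilon}(x)$ stays inside the neighborhood of $I \in \mathrm{SL}(2,\mathbb{R})$ where $\log$ is analytic. Defining $\mathfrak{P} + \epsilon \mathfrak{P}_1 + \epsilon^2 \mathfrak{P}_2 + \epsilon^3 \mathfrak{R}_{\epsilon}(x) := \log M(x)$, all coefficients automatically lie in $\mathrm{sl}(2,\mathbb{R})$ because $M \in \mathrm{SL}(2,\mathbb{R})$. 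The norm bounds on $\mathfrak{P}_2$ and $\mathfrak{R}_{\epsilon}$ then follow by term-by-term estimates in the logarithm series, using \eqref{pn6}, \eqref{pn7} and \eqref{pn12}, \eqref{pn15}.

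To extract the explicit form of $\mathfrak{P}_1$, I would factor $M = P \cdot N$ with $N = I + \epsilon P^{-1}[\widetilde{P}] + O(\epsilon^2) \in \mathrm{SL}(2,\mathbb{R})$, so that
\begin{equation*}
\log M = \log\!\bigl(e^{\mathfrak{P}} \cdot e^{\log N}\bigr) = \mathfrak{P} + \log N + \tfrac{1}{2}[\mathfrak{P}, \log N] + \tfrac{1}{12}[\mathfrak{P}, [\mathfrak{P}, \log N]] + O\bigl((\log N)^2\bigr),
\end{equation*}
and the Baker--Campbell--Hausdorff series in fact terminates at the $\mathrm{ad}_{\mathfrak{P}}^2$ term because $\mathfrak{P}^2 = 0$ forces $\mathrm{ad}_{\mathfrak{P}}^3 = 0$. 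Reading off the coefficient of $\epsilon$ gives $\mathfrak{P}_1 = P^{-1}[\widetilde{P}] + \tfrac{1}{2}[\mathfrak{P}, P^{-1}[\widetilde{P}]] + O(\mu_m^2)$, where the $O(\mu_m^2)$ piece is absorbed into $\mathfrak{P}_2$. Substituting the explicit form of $\widetilde{P}$ from \eqref{pn3} and carrying out a routine $2\times 2$ matrix multiplication then produces the stated formula for $\mathfrak{P}_1$. For the degree, the estimates \eqref{pn5} and \eqref{pn12} imply that $R_{1,\epsilon}$ and $R_{2,\epsilon}$ are close to $I$ and thus homotopic to the identity, so $\deg \widehat{R}_{\epsilon} = \deg R$.

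The main technical obstacle I anticipate is the bookkeeping of analyticity: $R_{1,\epsilon}$ is analytic only on $\Delta_{\delta}$ (one strip loss from $2\delta$) and $R_{2,\epsilon}$ only on $\mathbb{R}/\mathbb{Z}$, so the final identity lives only on $\mathbb{R}/\mathbb{Z}$, and one must verify that every norm bound written in powers of $\|R\|_{2\delta}$ survives these successive shrinkages dictated by the small-divisor condition \eqref{small}. Once the analytic bookkeeping and the verification that the logarithm series converges in this regime are in place, the BCH identification of $\mathfrak{P}_1$ is a purely algebraic exercise.
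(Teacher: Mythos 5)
Your proof follows the same route as the paper's, whose own argument for \eqref{pn55} is the single sentence that it "follows from \eqref{pn11} and some simple computations," with the degree claim \eqref{pn64} proved exactly as you do via the homotopy-to-identity of $R_{1,\epsilon}$ and $R_{2,\epsilon}$ from \eqref{pn5} and \eqref{pn12}. Your BCH expansion supplies the "simple computations" in the correct way; one small caveat worth recording is that the exact $\epsilon$-coefficient of $\log M$ also contains the term $\tfrac{1}{12}\mathrm{ad}_{\mathfrak{P}}^{2}(P^{-1}[\widetilde{P}])=\tfrac{\mu_m^2}{6}[R_{11}^2]E_{12}$, so absorbing the $O(\mu_m^2)$ remainder into $\epsilon^2\mathfrak{P}_2$ implicitly uses that $\mu_m^2/|\epsilon|$ is controlled — which holds in the actual application with $\epsilon=\epsilon_m\sim\mu_m\cdot\|R\|_0^{O(1)}$, but is tacitly glossed over in the statement for general $\epsilon$, just as the paper does.
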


\begin{proof}
(\ref{pn55}) follows from \eqref{pn11} and some simple computations.

It suffices to prove (\ref{pn64}). From (\ref{pn5}) and (\ref{pn12}), we obtain for $|\epsilon|\leq\frac{1}{C(\alpha)||R||_{2\delta}^4}$
\begin{equation*}
||R_{1,\epsilon}-I||_0\leq \frac{1}{4},||R_{2,\epsilon}-I||_0\leq \frac{1}{4},
\end{equation*}
so that both  $R_{1,\epsilon}$ and $R_{2,\epsilon}$ are homotopic  to the identity. This implies  (\ref{pn64}).
\end{proof}
\section{Proof of the main theorems}
In this section, we will complete the proofs  of Theorems \ref{main}, \ref{maint}, \ref{main1} and \ref{main2}.
We assume $|m| \geq m_{\star}$. Then by (\ref{p1e3}), $n$ is large enough.

\begin{thm}\label{pmt}
Suppose   $0<|\lambda|\leq \lambda_0$. We have
\begin{eqnarray}
\label{pme2}&&E_m^+-E_m^-\leq e^{-\frac{\eta}{3}n}.
\end{eqnarray}
\end{thm}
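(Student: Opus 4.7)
The plan would be to apply Theorem~\ref{pnm} at $\epsilon = E_m^- - E_m^+ \leq 0$ and combine the resulting near-constant form of the conjugated cocycle with the constancy of the fibered rotation number on the gap $G_m$. Since $\deg(\widehat{R}_\epsilon)=\deg(R)$ is independent of $\epsilon$ by~\eqref{pn64}, identity~\eqref{pr2} would force the rotation number of $(\alpha,\exp(M(\epsilon)+\epsilon^3\mathfrak{R}_\epsilon(x)))$, where $M(\epsilon):=\mathfrak{P}+\epsilon\mathfrak{P}_1+\epsilon^2\mathfrak{P}_2\in\mathfrak{sl}(2,\mathbb{R})$, to be identically zero for $\epsilon\in[E_m^--E_m^+,0]$: at $\epsilon=0$ this is the rotation number of the parabolic matrix $P=e^{\mathfrak{P}}$, namely $0$. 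Without loss of generality I assume $\mu_m>0$ in light of Remark~\ref{reliu}(ii); the $-I$ branch is symmetric.

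Next, using $\det M=-\tfrac{1}{2}\operatorname{tr}(M^2)$ together with $\operatorname{tr}(\mathfrak{P}^2)=0$, $\operatorname{tr}(\mathfrak{P}\mathfrak{P}_1)=-\mu_m[R_{11}^2]$ and $\operatorname{tr}(\mathfrak{P}_1^2)=-2\det\mathfrak{P}_1$, I expand
\[
\det M(\epsilon)=\mu_m[R_{11}^2]\,\epsilon+\epsilon^2\det\mathfrak{P}_1+O\!\bigl(\epsilon^2|\mu_m|\|R\|^4+\epsilon^3\|R\|^4\bigr).
\]
A direct computation gives $\det\mathfrak{P}_1=[R_{11}^2][R_{12}^2]-[R_{11}R_{12}]^2-\tfrac14\mu_m^2[R_{11}^2]^2$; combining \eqref{Re7}, \eqref{p1e1}, and \eqref{p1e2} yields $\det\mathfrak{P}_1\geq \tfrac12 c_\star e^{-C\beta(\alpha)n}>0$ for $n\geq n_\star$. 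Hence, to leading order,
\[
\det M(\epsilon)\approx\det\mathfrak{P}_1\cdot\epsilon(\epsilon-\epsilon^*),\qquad \epsilon^*:=-\frac{\mu_m[R_{11}^2]}{\det\mathfrak{P}_1}<0,
\]
so the constant part $\exp M(\epsilon)$ is hyperbolic (rotation number $0$, consistent with being in the gap) on $(\epsilon^*,0)$ and elliptic for $\epsilon<\epsilon^*$.

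The decisive comparison takes place at $\epsilon=2\epsilon^*$: the elliptic rotation angle $\theta_0\sim \sqrt{2\det\mathfrak{P}_1}\,|\epsilon^*|\gtrsim c_\star e^{-C\beta(\alpha)n/2}|\epsilon^*|$ must dominate the $x$-dependent tail $\epsilon^3\mathfrak{R}_\epsilon$, whose size is $\leq|\epsilon^*|^3 C(\alpha)\|R\|^8$. After conjugating $\exp M(2\epsilon^*)$ to a genuine rotation $R_{\theta_0}$ by some $Q$ with $\|Q\|\|Q^{-1}\|\lesssim\|M(2\epsilon^*)\|/\sqrt{\det M(2\epsilon^*)}\lesssim C_\star e^{3C\beta(\alpha)n/2}$, estimate~\eqref{rr} would place the full rotation number within $O\!\bigl(C_\star|\epsilon^*|^3 e^{5C\beta(\alpha)n/2}\bigr)$ of $\theta_0$. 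For $\eta>C_1\beta(\alpha)$ with $C_1$ a sufficiently large absolute constant, this error is $o(\theta_0)$, so the full rotation number is strictly positive at $2\epsilon^*$, contradicting its vanishing on the gap and forcing $E_m^+-E_m^-\leq 2|\epsilon^*|$.

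To conclude, by \eqref{p1e1}, \eqref{p1e2}, and the lower bound on $\det\mathfrak{P}_1$,
\[
|\epsilon^*|=\frac{\mu_m[R_{11}^2]}{\det\mathfrak{P}_1}\leq C_\star\mu_m\|R\|^2 e^{C\beta(\alpha)n}\leq C_\star e^{-\eta n/2+C\beta(\alpha)n},
\]
and choosing $C_1$ large enough that $C\beta(\alpha)\leq \eta/12$ gives $|\epsilon^*|\leq C_\star e^{-5\eta n/12}\leq e^{-\eta n/3}$ after absorbing $C_\star$ for $n\geq n_\star$. The main obstacle is the quantitative \emph{elliptic-dominates-perturbation} step: the diagonalizing conjugacy $Q$ has condition number that blows up as $M(2\epsilon^*)$ approaches the nilpotent $\mathfrak{P}$, so one must track it precisely enough to confirm that the cubic-in-$\epsilon$ tail $\epsilon^3\mathfrak{R}_\epsilon$ genuinely remains subordinate to the linear-in-$\epsilon$ elliptic angle, while simultaneously balancing $\eta$ against $\beta(\alpha)$ so that the exponential factors collapse to the stated $e^{-\eta n/3}$ rate.
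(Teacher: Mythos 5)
Your proposal follows essentially the same route as the paper: choose a negative perturbation $\epsilon$ on the scale of $\mu_m/([R_{11}^2][R_{12}^2]-[R_{11}R_{12}]^2)$ so that the constant part of the conjugated cocycle becomes elliptic, conjugate $\mathfrak D$ (your $M$) to a rotation by an explicit $Q$, use estimate~\eqref{rr} and the degree invariance~\eqref{pn64} to conclude that the fibered rotation number changes, hence $E_m^++\epsilon\notin G_m$, and finally bound $|\epsilon|$ by $e^{-\eta n/3}$ via~\eqref{p1e1}, \eqref{p1e2} and \eqref{Re7}. The only cosmetic difference is that you parametrize the determinant as $\det\mathfrak P_1\cdot\epsilon(\epsilon-\epsilon^*)$ and work at $2\epsilon^*$, whereas the paper sets $\epsilon_m=-2\mu_m[R_{11}^2]/([R_{11}^2][R_{12}^2]-[R_{11}R_{12}]^2)$; these agree up to $O(\mu_m^2)$ corrections and lead to identical estimates. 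You have also correctly flagged the conditioning of $Q$ (the paper bounds $\|Q\|,\|Q^{-1}\|\le\Delta^{-1/4}(-D_2)^{-1/2}$) as the delicate quantitative step that makes the "ellipticity dominates perturbation" argument go through; the paper supplies exactly this bound together with the resulting $\|\epsilon_m^3\mathfrak S\|_0\le e^{-\eta n/4}$ estimate, so your outline is sound.
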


\begin{proof}
 We let $\delta=5\beta(\alpha)$. From (\ref{p1e2}), one has
\begin{equation*}\label{pme3}
||R||_{2\delta}\leq C_{\star}e^{C\beta(\alpha)n}
\end{equation*}
and so that
\begin{equation}\label{pme4}
\frac{1}{C(\alpha)||R||_{2\delta}^4}\geq e^{-C\beta(\alpha) n}.
\end{equation}

  We define
\begin{equation*}\label{pme17}
\epsilon_m=\frac{-2\mu_m[R_{11}^2]}{[R_{11}^2][R_{12}^2]-[R_{11}R_{12}]^2}<0,
\end{equation*}
 since $\mu_m>0$ by Remark \ref{reliu}.

Following from (\ref{p1e1}) and (\ref{Re6}), we have
\begin{eqnarray}
    \nonumber|\epsilon_m|&\leq&C_{\star}e^{-\frac{\eta}{2} n+C\beta(\alpha)n}\\
    \nonumber&\leq&\frac{1}{C(\alpha)||R||_{2\delta}^4},
\end{eqnarray}
where the second inequality holds by  (\ref{pme4}).

Thus we can apply Theorem \ref{pnm} with $ \epsilon=\epsilon_m<0$. Let
\begin{eqnarray}
   \nonumber \mathfrak{D}&=&\mathfrak{P}+\epsilon_m\mathfrak{P}_1+\epsilon_m^2\mathfrak{P}_2\\
   \nonumber &:=&\left[\begin{array}{cc}
    D_1&D_2\\
   D_3&-D_1 \end{array}\right] \in{\rm sl}(2,\mathbb{R}),
    \end{eqnarray}
    where
    \begin{eqnarray}
   \nonumber &&D_1=\epsilon_m \left([R_{11}R_{12}]-\frac{\mu_m}{2}[R_{11}^2]\right)+O(\epsilon_m^2||\mathfrak{P}_2||),\\
   \nonumber &&D_2=\mu_m+\epsilon_m\left([R_{12}^2]-\mu_m[R_{11}R_{12}]\right)+O(\epsilon_m^2||\mathfrak{P}_2||),\\
   \nonumber &&D_3=-\epsilon_m[R_{11}^2]+O(\epsilon_m^2||\mathfrak{P}_2||),
    \end{eqnarray}
 and
    \begin{equation*}
        \Delta=\det{(\mathfrak{D})}=\frac{{\epsilon_m}^2}{2} ([R_{11}^2][R_{12}^2]-[R_{11}R_{12}]^2)+O(|\epsilon_m|^3||R||_0^2||\mathfrak{P}_2||^2+\mu_m\epsilon_m^2||R||_0^4||\mathfrak{P}_2||).
    \end{equation*}

Recalling (\ref{Re7}) and by direct computations, one has
\begin{eqnarray}
\nonumber |D_1|&\leq&e^{C\beta(\alpha)n}\mu_m,\\
\nonumber |D_2|&\geq&e^{-C\beta(\alpha)n}\mu_m, D_2<0,\\
\nonumber \Delta&\geq& e^{-C\beta(\alpha)n} \mu_m^2>0.
\end{eqnarray}


Let
\begin{equation*}
Q
=\left[
\begin{array}{cc}
0&\frac{\sqrt{-D_2}}{\Delta^{\frac{1}{4}}}\\
\frac{-\Delta^{\frac{1}{4}}}{\sqrt{-D_2}}&\frac{{D_1}}{\Delta^{\frac{1}{4}}\sqrt{-D_2}}
\end{array}
\right],\ Q^{-1}=\left[\begin{array}{cc}
\frac{{D_1}}{\Delta^{\frac{1}{4}}\sqrt{-D_2}}&-\frac{\sqrt{-D_2}}{\Delta^{\frac{1}{4}}}\\
\frac{\Delta^{\frac{1}{4}}}{\sqrt{-D_2}}&0\end{array}
\right].
\end{equation*}
Then
\begin{equation*}
Q^{-1}\mathfrak{D}Q=\left[
\begin{array}{cc}
0&-\sqrt{\Delta}\\
\sqrt{\Delta}&0
\end{array}
\right]
.
\end{equation*}
 Using the estimates  that
\begin{equation*}
    ||Q||, ||Q^{-1}||\leq \frac{1}{\Delta^{\frac{1}{4}}\sqrt{-D_2}},
\end{equation*}
 we obtain
\begin{equation}\label{pme18}
(\widehat{R}_{\epsilon_m}(x+\alpha){Q})^{-1}A^{E_m^{+}+\epsilon_m}(x)\widehat{R}_{\epsilon_m}(x){Q}=
e^{\sqrt{\Delta}\left(
\left[
\begin{array}{cc}
0&-1\\
1&0
\end{array}
\right]+\epsilon_m^3 \mathfrak{S}(x)
\right)},
\end{equation}
where
\begin{equation*}
\mathfrak{S}(x)=\frac{Q^{-1}(\mathfrak{R}_{\epsilon_m}(x))Q}{\sqrt{\Delta}}
\end{equation*}
and
\begin{eqnarray}
 \nonumber ||\epsilon_m^3\mathfrak{S}||_0&\leq&  C_{\star}e^{C\beta(\alpha)n}{\frac{|\epsilon_m|^3||R||_{2\delta}^8}{\mu_m^2}} \\
    &\leq& e^{-\frac{\eta}{4} n} \ll 1.\label{pme19}
\end{eqnarray}

Let $\rho'$ be the fibered rotation number of the right hand side of (\ref{pme18}).   Then $\rho'$ is small and $\rho'\neq0$  by (\ref{rr}) and (\ref{pme19}). Recalling (\ref{pr2}), (\ref{pn64}) and (\ref{pme18}), we have $$2\rho_{\lambda,\alpha}(E_m^++\epsilon_m)=2\rho'+\deg{(R)}\alpha \text{ mod } \mathbb{Z}$$ and  $$2\rho_{\lambda f,\alpha}(E_m^+)=\deg{(R)}\alpha \text{  mod } \mathbb{Z}.$$
This means  $\rho_{\lambda f,\alpha}(E_m^++\epsilon_m)\neq\rho_{\lambda f,\alpha}(E_m^+)$. Then $E_m^++\epsilon_m\notin G_{m}$, that is $$E_m^+-E_m^-\leq|\epsilon_m|\leq e^{-\frac{\eta}{3}n}.$$
\end{proof}
\
\begin{proof}[\bf Proof of Theorem \ref{main}]
In Lemma \ref{Blol2}, let $h=C_2\eta$.
Theorem \ref{main} follows from Theorem \ref{pmt} and the fact that $|m|\leq Cn$ by \eqref{p1e3}.
\end{proof}

\begin{proof}[\bf Proof of Theorem \ref{maint}]
 Theorem \ref{maint} follows from Theorem \ref{main} and the fact that any trigonometric polynomial  is analytic on $\mathbb{C}$.
\end{proof}

\begin{proof}[\bf Proof of Theorem \ref{main1}] For AMO, by Lemma \ref{Blol2},  $\lambda_0= e^{-C_2\eta}$ with $\eta>C_1\beta(\alpha)$.
Let $\eta=\frac{-\ln|\lambda|}{C_2}$ so that $|\lambda|\leq \lambda_0$.
By Theorem \ref{pmt},
we have for $|m|\geq m_{\star}$,
\begin{eqnarray}
\nonumber E_m^+-E_m^-&\leq&  e^{-\frac{1}{3}\eta n}\\
\nonumber&\leq&  |\lambda|^{\frac{1}{3C_2}  n}\\
 &\leq&  |\lambda|^{\frac{1}{3CC_2}  |m|},\label{up}
\end{eqnarray}
where the third inequality holds by (\ref{p1e3}). This implies Theorem \ref{main1}.
\end{proof}
In order to prove Theorem \ref{main2}, we need two lemmas.

\begin{lem}[Corollary 6.1, \cite{LYJMP}]\label{hl1}
Let $ |\lambda|\leq \lambda_0$. Then
\begin{equation}\label{hle1}
|\rho_{\lambda f,\alpha}(E_1)-\rho_{\lambda f,\alpha}(E_2)|\leq C_{\star}|E_1-E_2|^{\frac{1}{2}}, \text{ for all }\  \  E_1,E_2\in\mathbb{R}.
\end{equation}
\end{lem}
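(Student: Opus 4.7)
The plan is to derive the Hölder-$\tfrac{1}{2}$ estimate from the quantitative almost reducibility already developed in Sections 3 and 4, applied at the endpoints of spectral gaps, which are dense in the spectrum.

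\textbf{Reductions.} Since $\rho_{\lambda f,\alpha}$ is monotone non-increasing with image in $[0,\tfrac{1}{2}]$, the estimate is trivial whenever $|E_1-E_2|$ is bounded below by any absolute constant; so set $\epsilon = |E_2 - E_1|$ small. Since $\rho$ is locally constant on each connected component of $\mathbb R\setminus \Sigma_{\lambda f,\alpha}$, we may assume the interval spanned by $E_1, E_2$ meets $\Sigma_{\lambda f,\alpha}$, and after relabeling it suffices to fix one endpoint in the spectrum. By continuity of $\rho$ together with the density of the set $\{E_m^\pm\}_{m \in \mathbb{Z}\setminus\{0\}}$ in $\Sigma_{\lambda f,\alpha}$ (which follows from the gap-labelling property (ii) and the density of $\alpha\mathbb{Z} \bmod 1$), it suffices to show $|\rho(E_m^+ + \epsilon) - \rho(E_m^+)| \leq C_\star\sqrt{|\epsilon|}$ uniformly in $m$ with $|m| \geq m_\star$.

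\textbf{Local estimate via Theorem \ref{pnm}.} Fix such $E = E_m^+$ and $\epsilon$ small. Provided $|\epsilon| \leq \frac{1}{C(\alpha)\|R\|_{2\delta}^4}$, Theorem \ref{pnm} conjugates the perturbed cocycle $(\alpha, A^{E+\epsilon})$ by the PSL$(2,\mathbb{R})$-valued map $\widehat{R}_\epsilon$, of degree equal to $\deg R$, to the constant cocycle $e^{M(\epsilon)}$ with $M(\epsilon) = \mathfrak{P} + \epsilon \mathfrak{P}_1 + O(\epsilon^2) \in \mathfrak{sl}(2,\mathbb{R})$. A direct computation, essentially that of the proof of Theorem \ref{pmt}, yields the leading term $\det M(\epsilon) = \mu_m[R_{11}^2]\,\epsilon + O(\epsilon^2)$. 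The fibered rotation number of a constant $\mathrm{SL}(2,\mathbb{R})$-cocycle $e^M$ with $M$ small is $\frac{1}{2\pi}\sqrt{\max(0,\det M)}$; combining with the degree preservation \eqref{pn64} and formula \eqref{pr2} gives
\[
|\rho(E+\epsilon) - \rho(E)| \leq C\sqrt{\mu_m [R_{11}^2]\,|\epsilon|} + O(|\epsilon|) \leq C_\star\sqrt{|\epsilon|},
\]
using that $\mu_m$ is bounded (in fact exponentially small) and $[R_{11}^2] \leq 1$ uniformly in $m$.

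\textbf{Anticipated main obstacle.} The principal technical difficulty is uniformity in $m$: the admissible range for $\epsilon$ in Theorem \ref{pnm} shrinks like $e^{-C\beta(\alpha)n}$ as $|m|$ grows via $\|R\|_{2\delta}\leq C_\star e^{C\beta(\alpha)n}$, whereas the target constant $C_\star$ must depend only on $\lambda,f,\alpha$. For $\epsilon$ outside the admissible range at a given $E_m^+$, one cannot apply Theorem \ref{pnm} at that base point and must instead exploit that any sufficiently small $\epsilon$-interval contains (or is within $\epsilon$ of) some endpoint $E_{m'}^\pm$ with $|m'|$ much smaller, where the local estimate applies with a uniform constant. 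Making this packing/covering argument rigorous, and verifying that the exponentially small gap-size bound of Theorem \ref{pmt} forces large-$m$ endpoints to cluster tightly enough that the overlap is harmless, is the delicate step; this is the content carried out in \cite{LYJMP}, and is what upgrades the pointwise local estimate into the uniform Hölder-$\tfrac{1}{2}$ bound stated in Lemma \ref{hl1}.
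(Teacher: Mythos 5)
The paper does not prove Lemma \ref{hl1}: it is imported verbatim from \cite{LYJMP} (Corollary 6.1), where it is deduced directly from the subexponential growth estimate on transfer matrices on the spectrum (the same bound recorded here as Lemma \ref{te}) via a general ``subexponential growth of $A_k$ implies $\tfrac12$-H\"older continuity of the rotation number'' principle, in the spirit of Avila--Jitomirskaya \cite{AJCMP}. That route works at \emph{every} $E\in\Sigma_{\lambda f,\alpha}$ uniformly and does not pass through gap endpoints, almost reducibility, or the parabolic normal form at all. Your route --- density of $\{E_m^\pm\}$ plus the local perturbation Theorem \ref{pnm} at each gap endpoint --- is therefore genuinely different from the cited source, and it reproduces a considerably harder path.

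Beyond being a different route, there are two problems. First, a small factual error: you assert $[R_{11}^2]\leq 1$ ``uniformly in $m$''. This is not implied by anything in the paper; all one has is $\|R\|_{20\beta(\alpha)}\leq C_\star e^{C\beta(\alpha)n}$ from \eqref{p1e2}, so $[R_{11}^2]$ can be exponentially large in $n$. The product $\mu_m[R_{11}^2]$ \emph{is} bounded, but only via \eqref{p1e1} combined with $\eta>C_1\beta(\alpha)$, not because $[R_{11}^2]\leq 1$. Second and more seriously, the ``anticipated main obstacle'' paragraph is a genuine hole, and the proposed fix would not close it. The admissible window for $\epsilon$ at $E_m^+$ shrinks like $e^{-C\beta(\alpha)n}$, so a covering argument would require that, for every $E\in\Sigma$ and every scale $\epsilon$, there is a gap endpoint $E_{m'}^\pm$ within distance $O(\epsilon)$ of $E$ with $|m'|\lesssim \log(1/\epsilon)/\beta(\alpha)$ --- a quantitative \emph{density from above} of gap endpoints by label. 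Nothing in the paper provides this; the upper bound on gap sizes from Theorem \ref{pmt} controls how \emph{small} each gap is, not where gap endpoints lie, and cannot by itself force small-label endpoints to appear near an arbitrary spectral point. Moreover \cite{LYJMP} does not carry out such a packing argument; it avoids the issue entirely by working directly from the transfer-matrix growth bound. As written, the proof is incomplete, and the stated escape hatch attributes to \cite{LYJMP} an argument that is not there.
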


\begin{lem}\label{hl2}
Let     $G_m=(E_m^-,E_m^+)$ for  $m\in\mathbb{Z}\setminus\{0\}$ and $G_0=(-\infty,E_{\min})\cup (E_{\max},+\infty)$. Then for $m'\neq m\in\mathbb{Z}\setminus\{0\}$ with $|m'|\geq|m|$, we have
\begin{equation}\label{hle2}
\mathrm{dist}(G_m,G_{m'})=\inf\limits_{x\in G_m,x'\in G_{m'}}{|x-x'|}\geq c_{\star} e^{-8\beta(\alpha)|m'|},
\end{equation}
and for $m\in\mathbb{Z}\setminus\{0\}$
\begin{equation}\label{hle3}
\mathrm{dist}(G_m,G_0)\geq c_{\star} e^{-8\beta(\alpha)|m|}.
\end{equation}
\end{lem}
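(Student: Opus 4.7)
The plan is to exploit the labeling of gaps by the fibered rotation number and combine it with the H\"older estimate of Lemma~\ref{hl1}.

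First I would record the values of $2\rho_{\lambda f,\alpha}$ on the gaps. For any $x\in G_m$ with $m\neq 0$, the relation $2\rho_{\lambda f,\alpha}(x)\equiv m\alpha \pmod{\mathbb{Z}}$ together with $2\rho\in[0,1]$ forces $2\rho_{\lambda f,\alpha}(x)=\{m\alpha\}$; in particular, $\rho$ is constant on each $G_m$ by properties (i)--(ii) of the fibered rotation number. For $x\in G_0$, the same properties give $2\rho_{\lambda f,\alpha}(x)=1$ on $(-\infty,E_{\min})$ and $2\rho_{\lambda f,\alpha}(x)=0$ on $(E_{\max},\infty)$.

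Now take any $x\in G_m$ and $x'\in G_{m'}$ with $m\neq m'$ both nonzero. Since $\{m\alpha\}-\{m'\alpha\}\equiv (m-m')\alpha \pmod{\mathbb{Z}}$, the actual real number $2\rho(x)-2\rho(x')$ differs from $(m-m')\alpha$ by an integer, hence
$$|2\rho(x)-2\rho(x')|\geq \|(m-m')\alpha\|_{\mathbb{R}/\mathbb{Z}}.$$
Applying Lemma~\ref{hl1} I would then obtain
$$\|(m-m')\alpha\|_{\mathbb{R}/\mathbb{Z}}\leq |2\rho(x)-2\rho(x')|\leq 2C_\star|x-x'|^{1/2},$$
so that $|x-x'|\geq c_\star\|(m-m')\alpha\|_{\mathbb{R}/\mathbb{Z}}^2$. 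Invoking the small divisor bound \eqref{small} and using $|m-m'|\leq 2|m'|$ (which comes from the hypothesis $|m'|\geq|m|$), this gives $\|(m-m')\alpha\|_{\mathbb{R}/\mathbb{Z}}^2 \geq C(\alpha)^2 e^{-4\beta(\alpha)|m-m'|}\geq C(\alpha)^2 e^{-8\beta(\alpha)|m'|}$, which yields \eqref{hle2}. For \eqref{hle3}, the same argument applies with $x'\in G_0$: since $2\rho(x')\in\{0,1\}$, the difference $|2\rho(x)-2\rho(x')|$ equals either $\{m\alpha\}$ or $1-\{m\alpha\}$, both of which are at least $\|m\alpha\|_{\mathbb{R}/\mathbb{Z}}$; the same computation then gives a lower bound of $c_\star e^{-4\beta(\alpha)|m|}\geq c_\star e^{-8\beta(\alpha)|m|}$, as required.

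The argument is essentially routine; the only care needed is to regard $2\rho$ as a genuine number in $[0,1]$ (rather than an element of $\mathbb{R}/\mathbb{Z}$) so that the lower bound $|2\rho(x)-2\rho(x')|\geq \|(m-m')\alpha\|_{\mathbb{R}/\mathbb{Z}}$ is valid, and to use the constraint $|m'|\geq|m|$ at the right moment to absorb $|m-m'|$ into $2|m'|$. No substantial obstacle is expected.
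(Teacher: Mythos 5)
Your proposal is correct and follows essentially the same route as the paper's proof: both reduce the estimate to the H\"older bound of Lemma~\ref{hl1} applied to the rotation-number increment across the two gaps, then invoke the small-divisor bound \eqref{small} together with $|m-m'|\leq 2|m'|$. The only cosmetic difference is that the paper works directly with the gap endpoints $E_m^+$ and $E_{m'}^-$ (where the infimum is attained), whereas you argue for arbitrary points in the gaps and note that $\rho$ is constant there; the substance is identical.
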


\begin{proof}
We start with the proof of (\ref{hle2}). From the small divisor condition (\ref{small}), one has
\begin{eqnarray}
\nonumber||(m-m')\alpha||_{\mathbb{R}/\mathbb{Z}}&\geq& \frac{1}{C(\alpha)}e^{-2\beta(\alpha)|m-m'|}\\
\label{hle4}&\geq& \frac{1}{C(\alpha)}e^{-4\beta(\alpha)|m'|},
\end{eqnarray}
for $|m'|\geq |m|$.

Without loss of generality, we assume $E_m^+\leq E_{m'}^-$. By Lemma \ref{hl1}, we have
\begin{eqnarray}
\nonumber\mathrm{dist}(G_m,G_{m'})&=&|E_{m'}^--E_m^+|\\
\nonumber&\geq& \left(\frac{1}{C_{\star}}|\rho_{\lambda f,\alpha}(E_{m'}^-)-\rho_{\lambda f,\alpha}(E_m^+)|\right)^2\\
\nonumber&\geq& c_{\star} ||(m-m')\alpha||_{\mathbb{R}/\mathbb{Z}}^2,\\
&\geq& c_{\star}e^{-8\beta(\alpha)|m'|},
\end{eqnarray}
where the second inequality holds by \eqref{liugap} and the third inequality holds by (\ref{hle4}).
We finish the proof of  (\ref{hle2}).  The proof of (\ref{hle3}) is similar.
\end{proof}

\begin{proof}[\bf Proof of Theorem \ref{main2}]
Let $\eta=C_1\beta(\alpha)$.
We assume $0<\sigma\leq \sigma_{\star}(\lambda,f,\alpha,\epsilon)$.
For $E\in\Sigma_{\lambda f,\alpha}$ and $ \sigma $, let
$$\mathcal{R}(E,\sigma)=\{m\in\mathbb{Z}\setminus\{0\}: (E-\sigma,E+\sigma)\cap G_m\neq \emptyset\}.$$
Define $m_0\in\mathbb{Z}\setminus\{0\}$ with $|m_0|=\min\limits_{m\in\mathcal{R}(E,\sigma)}|m|$. For any $m\in\mathcal{R}(E,\sigma)$, one has
\begin{equation}\label{hle6}
\mathrm{dist}(G_m,G_{m_0})\leq 2\sigma.
\end{equation}

We first assume $(E-\sigma,E+\sigma)\cap G_0=\emptyset$. Recalling (\ref{hle2}), we have for any $m\in\mathcal{R}(E,\sigma)$ with $m\neq m_0$,
\begin{equation*}
2\sigma\geq c_{\star}e^{-8\beta(\alpha)|m|},
\end{equation*}
that is 
\begin{equation}\label{hle7}
|m|\geq \frac{-\ln{(C_{\star}\sigma)}}{8\beta(\alpha)}.
\end{equation}
Then by (\ref{pme2}), we obtain
\begin{equation*}
   \sum\limits_{m\in\mathcal{R}(E,\sigma),m\neq m_0}\mathrm{Leb}((E-\sigma,E+\sigma)\cap G_m)\;\;\;\;\;\;\;\;\;\;\;\;\;\;\;\;\;\;\;\;\;\;\;\;\;\;\;\;\;\;\;\;
\end{equation*}
\begin{eqnarray}
\nonumber\;\;\;\;\;\;\;\;\;\;\;\;\;\;\;\;&\leq&\sum\limits_{m\in\mathcal{R}(E,\sigma),m\neq m_0}(E_m^+-E_m^-)\\
\nonumber\;\;\;\;\;\;\;\;\;\;\;\;\;\;\;\;&\leq& \sum\limits_{|m|\geq\frac{-\ln{(C_{\star}\sigma)}}{8\beta(\alpha)}}C_{\star}e^{-c\eta|m|}\\
\label{hle8}\;\;\;\;\;\;\;\;\;\;\;\;\;\;\;\;&\leq&\epsilon\sigma.
\end{eqnarray}
On the other hand, $E\in\Sigma_{\lambda f,\alpha}$ implies $E\notin G_{m_0}$. Thus  we have
\begin{equation}\label{hle9}
\mathrm{Leb}((E-\sigma,E+\sigma)\cap G_{m_0})\leq\sigma.
\end{equation}
In this case,   (\ref{hle8}) and (\ref{hle9}) imply
\begin{eqnarray}
\nonumber&&\mathrm{Leb}((E-\sigma,E+\sigma)\cap \Sigma_{\lambda,\alpha})\\
\nonumber&\geq&2\sigma-\mathrm{Leb}((E-\sigma,E+\sigma)\cap G_{m_0})\\
&&-\sum\limits_{m\in\mathcal{R}(E,\sigma),m\neq m_0}\mathrm{Leb}((E-\sigma,E+\sigma)\cap G_m)\\
\label{hle10}\nonumber\;\;\;\;\;\;\;\;\;&\geq&2\sigma-\sigma-\epsilon\sigma\geq(1-\epsilon)\sigma.
\end{eqnarray}

In the case $(E-\sigma,E+\sigma)\cap G_0\neq\emptyset$, without loss of generality, we assume $(E-\sigma,E+\sigma)\cap (-\infty,E_{\min})\neq\emptyset$. Then  we have  $$0< E_m^--E_{\min}\leq2\sigma$$
for any $m\in\mathcal{R}(E,\sigma)$. Thus, (\ref{hle7}) also holds for any $  m\in \mathcal{R}(E,\sigma)$ by (\ref{hle3}). From the proof of (\ref{hle8}), we have
\begin{equation}\label{hle11}
\sum\limits_{m\in\mathcal{R}(E,\sigma)}\mathrm{Leb}((E-\sigma,E+\sigma)\cap G_m)\leq \epsilon\sigma.
\end{equation}
Noticing that  $E\in\Sigma_{\lambda f,\alpha}$ and  $E\notin G_0$, one has
\begin{equation}\label{liu7}
   \mathrm{Leb}((E-\sigma,E+\sigma)\cap G_0)\leq\sigma.
\end{equation}
By
  (\ref{hle11})  and \eqref{liu7}, we obtain
\begin{eqnarray}
&&\nonumber\mathrm{Leb}((E-\sigma,E+\sigma)\cap \Sigma_{\lambda,\alpha})\\
\nonumber \;\;\;\;\;\;\;\;\;\;\;\;\;\;\;\;\;\;\;\;&\geq&2\sigma-\mathrm{Leb}((E-\sigma,E+\sigma)\cap G_0)\\
\nonumber&&-\sum\limits_{m\in\mathcal{R}(E,\sigma)}\mathrm{Leb}((E-\sigma,E+\sigma)\cap G_m)\\
\nonumber \;\;\;\;\;\;\;\;\;\;\;\;\;\;\;\;\;\;\;\;&\geq&2\sigma-\sigma-\epsilon\sigma\geq(1-\epsilon)\sigma.
\end{eqnarray}
Putting all the cases together, we complete the proof of    Theorem \ref{main2}.
\end{proof}

\section*{appendix}

\begin{proof}[\bf Proof of Theorem \ref{p3}]
By (\ref{pn3}), we have the upper bound
\begin{equation}\label{liu5}
     ||\widetilde{P}||_{2\delta}\leq C||R||_{2\delta}^2.
\end{equation}

(i). Notice that \eqref{pn6} follows from \eqref{pn10} and \eqref{liu5} directly.
It  suffices to prove \eqref{pn5} and \eqref{pn7}.  The strategy employs Newton's iteration. Let us consider the  cocycles of the form
\begin{equation}\label{pn17}
R_{1,\epsilon}(x)=e^{\epsilon \mathfrak{Y}(x)},
\end{equation}
where $\mathfrak{Y}(x)\in  {\rm sl}(2,\mathbb{R})$  will be specified later. Under the conjugacy of $R_{1,\epsilon}(x)$ in (\ref{pn17}), we  have
\begin{eqnarray}
\nonumber&&e^{-\epsilon \mathfrak{Y}(x+\alpha)}(P+\epsilon\widetilde{P}(x))e^{\epsilon \mathfrak{Y}(x)}\\
\label{pn18}&&=\left(I-\epsilon\mathfrak{Y}(x+\alpha)+O(\epsilon^2)\right)\left(P+\epsilon \widetilde{P}(x)\right)
\left(I+\epsilon\mathfrak{Y}(x)+O(\epsilon^2)\right).
\end{eqnarray}

In order to make the  nonconstant terms of order $\epsilon$ in (\ref{pn18}) vanish, we need to solve
 \begin{equation}\label{pn20}
\mathfrak{Y}(x+\alpha)P-P\mathfrak{Y}(x)=\widetilde{P}-[\widetilde{P}].
\end{equation}

We can solve equation (\ref{pn20}) by using Fourier coefficients.
For this reason,
 let
\begin{eqnarray}
\label{pn23}&&\widehat{\mathfrak{Y}}_{21}(k)=\frac{\widehat{\widetilde{P}_{21}}(k)}{e^{2k\pi i\alpha}-1}\ (k\neq 0), \\
\label{pn24}&&\widehat{\mathfrak{Y}}_{11}(k)=\frac{\mu_m\widehat{\widetilde{P}_{21}}(k)+(e^{2k\pi i\alpha}-1)\widehat{\widetilde{P}_{11}}(k)}{(e^{2k\pi i\alpha}-1)^2}\ (k\neq 0),\\
\label{pn25}&&\widehat{\mathfrak{Y}}_{22}(k)=\frac{(e^{2k\pi i\alpha}-1)\widehat{\widetilde{P}_{22}}(k)-\mu_m e^{2k\pi i\alpha}\widehat{\widetilde{P}_{21}}(k)}{(e^{2k\pi i\alpha}-1)^2}\ (k\neq 0),\\
\label{pn26}&&\widehat{\mathfrak{Y}}_{12}(k)=\frac{\widehat{\widetilde{P}_{12}}(k)+\mu_m\left(\widehat{\mathfrak{Y}}_{22}(k)-e^{2k\pi i\alpha}\widehat{\mathfrak{Y}}_{11}(k)\right)}{e^{2k\pi i\alpha}-1}\ (k\neq 0),
\end{eqnarray}
and
\begin{equation}\label{pn27}
\widehat{\mathfrak{Y}}_{ij}(0)=0\ \mbox{(for any $1\leq i,j\leq 2$)},
\end{equation}
 where $\mathfrak{Y}(x)=(\mathfrak{Y}_{ij}(x))_{1\leq i,j\leq2}$, $\widetilde{P}(x)=(\widetilde{P}_{ij}(x))_{1\leq i,j\leq2}$
 and
\begin{eqnarray*}
&&\mathfrak{Y}_{ij}(x)=\sum_{k\in\mathbb{Z}}\widehat{\mathfrak{Y}}_{ij}(k)e^{2\pi k i x},
\widetilde{P}_{ij}(x)=\sum_{k\in\mathbb{Z}}\widehat{\widetilde{P}_{ij}}(k)e^{2\pi k i x}.
\end{eqnarray*}
It is  easy to  check  that $\mathfrak{Y}(x)$  given by (\ref{pn23})-(\ref{pn26}) solves  (\ref{pn20}) and  belongs to ${\rm sl}(2,\mathbb{R})$.

From the small divisor condition (\ref{small}) and (\ref{pn23})-(\ref{pn26}), we have
\begin{eqnarray}\label{pn42}
||\mathfrak{Y}||_{\delta} \leq C(\alpha)||\widetilde{P}||_{2\delta}\leq C(\alpha)||R||_{2\delta}^2,
\end{eqnarray}
where the second equality holds by \eqref{liu5}.

By the definition of $\mathfrak{Y}(x)$ and (\ref{pn17}), one has
\begin{equation*}
R_{1,\epsilon}^{-1}(x+\alpha)(P+\epsilon\widetilde{P}(x))R_{1,\epsilon}(x) =P_{1,\epsilon}+\epsilon^2\widetilde{P}_{1,\epsilon}(x).
\end{equation*}
Now we are in the position to get the estimates.

Assume
\begin{equation}\label{pn45}
|\epsilon|\leq\frac{1}{C(\alpha)||R||_{2\delta}^2}.
\end{equation}
From (\ref{pn42}) and (\ref{pn45}), one has
\begin{equation*}
|\epsilon|\cdot||\mathfrak{Y}||_\delta\leq c(\alpha),
\end{equation*}
and then
\begin{eqnarray}
\nonumber||R_{1,\epsilon}-I||_\delta
\nonumber&\leq&\sum_{k=1}^{+\infty}\frac{\epsilon^k||\mathfrak{Y}||_{\delta}^{k}}{k!}\\
\nonumber&\leq&C|\epsilon|\cdot||\mathfrak{Y}||_\delta\\
\label{pn47}&\leq& C(\alpha)||R||_{2\delta}^2|\epsilon|,
\end{eqnarray}
which implies (\ref{pn5}). 
 By direct computations, we obtain
\begin{eqnarray}
\nonumber&&\epsilon^2\widetilde{P}_{1,\epsilon}(x)\\
\label{pn48}&&=\epsilon^2(\widetilde{P}(x)\mathfrak{Y}(x)-\mathfrak{Y}(x+\alpha)P\mathfrak{Y}(x)
-\mathfrak{Y}(x+\alpha)\widetilde{P}(x)-\epsilon\mathfrak{Y}(x+\alpha)\widetilde{P}(x)\mathfrak{Y}(x))\\
\label{pn49}&&+\sum_{k=2}^{+\infty}\frac{(-\epsilon)^k\mathfrak{Y}^{k}(x+\alpha)}{k!}
\left(P+\epsilon\widetilde{P}(x)\right)e^{\epsilon\mathfrak{Y}(x)}\\
\label{pn50}&&+\left(1-\epsilon\mathfrak{Y}(x+\alpha)\right)\left(P+\epsilon\widetilde{P}(x)\right)
\sum_{k=2}^{+\infty}\frac{\epsilon^k\mathfrak{Y}^{k}(x)}{k!}.
\end{eqnarray}
 By (\ref{liu5}) and  (\ref{pn42}), we obtain the following estimates,
\begin{eqnarray*}
||(\ref{pn48})||_\delta&\leq&\epsilon^2(2||\widetilde{P}||_\delta\cdot||\mathfrak{Y}||_\delta+||P||\cdot||\mathfrak{Y}||_\delta^2
+|\epsilon|\cdot||\widetilde{P}||_\delta\cdot||\mathfrak{Y}||_\delta^2)\\
&\leq&C(\alpha)||R||_{2\delta}^4\epsilon^2,\\
\end{eqnarray*}
 \begin{eqnarray*}
||(\ref{pn49})||_\delta&\leq&(e^{|\epsilon|\cdot||\mathfrak{Y}||_\delta}-1-|\epsilon|\cdot||\mathfrak{Y}||_\delta)
(2+|\epsilon|\cdot||\widetilde{P}||_\delta)e^{|\epsilon|\cdot||\mathfrak{Y}||_\delta}\\
&\leq&C\epsilon^2\cdot||\mathfrak{Y}||_\delta^2(2+|\epsilon|\cdot||R||_\delta^2)
(1+|\epsilon|\cdot||\mathfrak{Y}||_\delta)\\
&\leq&C(\alpha)||R||_{2\delta}^4\epsilon^2,
\end{eqnarray*}
 and
 \begin{eqnarray*}
||(\ref{pn50})||_\delta &\leq& C(\alpha)||R||_{2\delta}^4\epsilon^2.
\end{eqnarray*}
This  implies (\ref{pn7}).

(ii). The proof of (ii) is similar to the proof  (i). Let
\begin{equation*}
R_{2,\epsilon}(x)=e^{\epsilon^2\mathfrak{X}(x)}, \mathfrak{X}(x)\in{\rm sl}(2,\mathbb{R}),
\end{equation*}
and the homological equation becomes
\begin{equation*}
\mathfrak{X}(x+\alpha)P-P\mathfrak{X}(x)=P_{1,\epsilon}^{\ast}(x),
\end{equation*}
where
\begin{equation*}
P_{1,\epsilon}^{\ast}(x)=\widetilde{P}_{1,\epsilon}(x)-[\widetilde{P}_{1,\epsilon}].
\end{equation*}
Thus under the conjugacy $R_{2,\epsilon}$, we have
\begin{equation*}
R_{2,\epsilon}^{-1}(x+\alpha)(P_{1,\epsilon}+\epsilon^2\widetilde{P}_{1,\epsilon}(x))R_{2,\epsilon}(x) =P_{2,\epsilon}+\epsilon^3 \widetilde{P}_{2,\epsilon}(x),
\end{equation*}
and the  estimates are similar to those in (i).

\end{proof}
\subsection*{Acknowledgment}
The authors   would like to thank Svetlana
Jitomirskaya      for comments on earlier versions of the manuscript. The authors wish to express their gratitude to the  anonymous referee, whose
extraordinary care and thoroughness in reading the manuscript greatly helped the
exposition of the main ideas.
W.L. was supported by the AMS-Simons Travel Grant 2016-2018, NSF DMS-1401204 and NSF DMS-1700314.

\end{document}